\documentclass[11pt,a4paper]{amsart}


\usepackage{fullpage}

\usepackage{graphicx} 
\usepackage[utf8]{inputenc}
\usepackage[english]{babel}
\usepackage[T1]{fontenc}
\usepackage{amssymb}
\usepackage{mathtools}
\usepackage{amsfonts}
\usepackage{amsmath}
\usepackage{amsthm}
\usepackage{mathrsfs}
\usepackage{changepage}
\graphicspath{{images/}}
\usepackage{fancyhdr}
\usepackage{csquotes}

\usepackage[backend=biber, sortcites=true, maxbibnames=50]{biblatex}
\addbibresource{main.bib}

\usepackage{hyperref}
\hypersetup{
	colorlinks=true,
	linkcolor=blue,
	citecolor=blue,
	urlcolor=blue,
	pdftitle={}
}





\newtheorem{theorem}{Theorem}  


\theoremstyle{plain}
\newtheorem{thm}{Theorem}
\newtheorem{lemma}[thm]{Lemma}
\newtheorem{corollary}[thm]{Corollary}

\newtheorem{prop}[thm]{Proposition}

\theoremstyle{definition}
\newtheorem{definition}[thm]{Definition}

\newtheorem{remark}[thm]{Remark}
\newtheorem{exmp}[thm]{Example}

\numberwithin{equation}{section}
\numberwithin{thm}{section}

\setlength{\parindent}{5pt}
\usepackage{setspace}

\theoremstyle{remark} 
\newtheorem*{ack}{Acknowledgements}

\title{Deformations of Lipschitz Homeomorphisms}
\author{Mohammad Alattar}
\date{\today}

\address[Alattar]{Department of Mathematical Sciences, Durham University, United Kingdom}
\email{{mohammad.al-attar@durham.ac.uk}}

\emergencystretch=2em

\begin{document}

\begin{abstract}
 We obtain the Lipschitz analogues of the results Perelman used from Siebenmann's deformation of homeomorphism theory in his proof of the stability theorem. Consequently, we obtain the Lipschitz analogue of Perelman's gluing theorem. Moreover, we obtain the analogous  deformation theory but with tracking of the Lipschitz constants. 
\end{abstract}

        \subjclass[2010]{53C23, 53C20, 51K10, 57S05, 58D05}
	\keywords{Gromov--Hausdorff convergence,  stability, Homeomorphisms, Lipschitz}
\maketitle

\section{Introduction}

In the early 1970's, Siebenmann \cite{siebenmann1972deformation} introduced a class of spaces, termed \emph{locally cone-like spaces} (\emph{CS sets}). Such spaces generalize the notion of a manifold. In particular, such spaces are not necessarily manifolds, but can always be stratified into manifolds. Although such a space can be quite singular, its space of open embeddings behaves as though it were the space of open embeddings of a manifold. More precisely, Siebenmann introduced a general deformation rule $\mathcal{D}(X)$, satisfied by $CS$ sets, that enjoys numerous desirable properties. For instance, the deformation rule allows one to deform open embeddings lying in a small enough neighborhood of the inclusion in a manner akin to the manifold case. In particular, the deformation rule yields an isotopy theory that generalizes the Edwards and Kirby isotopy theory \cite{DeformationsofSpaces}. Indeed, Siebenmann's theory shows that the homeomorphism group of a reasonable compactum $X$ that is not necessarily a manifold, yet enjoys the rule $\mathcal{D}(X)$, is locally contractible (say, with the compact-open topology). The local contractibility of the homeomorphism group of a compact manifold is well known and was first proven by Cernavskii in 1968 \cite{Cernavskii}, and later reproven, in the early 1970's, by the use of the simple, yet powerful torus trick due to Kirby \cite{stablehomeos,DeformationsofSpaces}.

The idea behind showing that $CS$ sets satisfy the rule $\mathcal{D}(X)$ is as follows. One notices that $CS$ sets of a certain ``depth'' $d$ can be covered by open sets of depth at most $d$, each of which is ``equivalent'' to some $\mathbb{R}^m\times cL$, where $cL$ denotes the cone over $L$. Thus, it suffices to show, through induction, that the principle $\mathcal{D}(\mathbb{R}^m\times cL)$ holds. The key point being that one must deform embeddings on $\mathbb{R}^m\times cL$ appropriately. Siebenmann's deformation rule $\mathcal{D}(X)$, and more specifically, the theory of Edwards and Kirby, are  topological and, although they offer many desirable properties, they do not necessarily satisfy the condition that the perturbed map inherits the regularity properties of the initial map.  In the 1970's, shortly after the announcement of the celebrated theory of Edwards and Kirby \cite{DeformationsofSpaces}, Sullivan \cite{Sullivan}, using deep theory from etalé cohomology, offered, for Lipschitz manifolds, an alternative to the Kirby immersion device. Namely, by replacing the torus by a certain quotient of hyperbolic space, Sullivan obtained an analogue of the Edwards--Kirby theory on Lipschitz manifolds. Therefore, Sullivan's construction allowed for the introduction of a Lipschitz deformation rule $\mathcal{L}(X)$.

Examples of sets admitting a CS stratification that are not necessarily manifolds include Alexandrov spaces (see \cite{fujioka2024alexandrov} for a nice proof of this fact). Alexandrov spaces, introduced by Burago, Gromov and Perelman \cite{burago1992ad} in 1990 (see also \cite{Alexander-Kapovitch-Petrunin,Burago-Burago-Ivanov}) are metric generalizations of complete Riemannian manifolds with a uniform lower sectional curvature bound. Indeed, they have played a major role in metric and Riemannian geometry (see for example \cite{naberli, tuschmannkapovitchandpetrunin, kapovitchwilking, grovewilhelm,
 kapovitchmondino,   groveshiohama,   Lytchak-Nagano, LytchakNagano2, bruè2024topological,  brue-mondino-semola, kapovitch-lytchak-petrunin, Fernando3, Alexandrovmaximalradius,radiusspheretheorem, tadashi,finitenessdim4}).

By analyzing the local structure of Alexandrov spaces, Perelman in 1991 used Siebenmann's theory of deforming homeomorphisms to establish the celebrated stability theorem (see \cite{perelman1991alexandrov,Vitali}). The stability theorem asserts that if a given sequence of compact $n$-dimensional Alexandrov spaces $\{X_i\}_{i\in \mathbb{N}}$ with a uniform lower curvature bound Gromov--Hausdorff converges to another compact Alexandrov space $X$ with no collapse, then for all sufficiently large $i$, $X_i$ and $X$ are homeomorphic. It is claimed that Perelman proved a bi-Lipschitz analogue, though no such proof has been published. 

To generalize both Siebenmann's theory and the topological techniques used by Perelman in his stability theorem to the Lipschitz category, one must, first, have appropriate deformation principles. Second, one must deform, appropriately, (open) Lipschitz embeddings on $\mathbb{R}^m\times (cone)$. Both of these points have been addressed and have been answered in \cite{SiebenmannSullivan,Sullivan}. Lastly, one must obtain Lipschitz analogues of the relevant topological ingredients of Perelman's stability theorem: the isotopy extension theorem (Theorem 6.5 in \cite{siebenmann1972deformation}) and the union lemma (Lemma 6.9 in \cite{siebenmann1972deformation}). Indeed, a corollary of both is the important fibration theorem (Corollary 6.14 in \cite{siebenmann1972deformation} and Theorem $A$ in \cite{perelman1991alexandrov}) that a  closed topological submersion $p\colon E\rightarrow X$ with compact fibers, such that $\mathcal{D}(p^{-1}(x))$ is satisfied for all $x\in X$, and $p^{-1}(x)$ is a locally connected Hausdorff space, is a locally trivial fiber bundle.

Using the techniques in Siebenmann's paper \cite{siebenmann1972deformation}, Perelman obtained the fundamental gluing theorem (Theorem $B$ in \cite{perelman1991alexandrov} and Theorem 4.6 in \cite{Vitali}). The gluing theorem, roughly speaking, gives conditions as to when one can glue locally defined homeomorphisms near a global Hausdorff approximation to a homeomorphism that remains close to the approximation. Using reverse induction, and the notion of a ``frame'', Perelman proved the stability theorem by essentially reducing it to the gluing theorem. Indeed, Perelman reduced the problem of stability, which is a global problem, to a local problem. We note that in addition to the stability theorem, the topological gluing theorem has found applications in the theory of $\mathrm{RCD}$ spaces (Theorem 3.5 in \cite{kapovitchmondino}).

In this paper we will first prove the Lipschitz analogue of the isotopy extension result in \cite{siebenmann1972deformation}. Further, we will under a canonical adaptation of the rule $\mathcal{L}(X)$ (see Definition \ref{refined principle}), prove the Lipschitz analogue of the ``union lemma''. The union lemma (Lemma \ref{uniion lemma}), roughly speaking, ensures that under certain mild but rather important conditions, one can glue ``product charts'' on a neighborhood of a union. The union lemma has further applications other than Siebenmann's theory of deforming homeomorphisms (see for instance page 227 in \cite{kirby-siebenmann}). Using the new Lipschitz union lemma, we will show that provided that the fibers of a nice enough Lipschitz submersion satisfy the (adapted) Lipschitz deformation rule, then the Lipschitz submersion is indeed a locally trivial Lipschitz bundle. Moreover the adapted rule will yield a stronger and more desirable isotopy extension theorem (see Corollary \ref{strengthening}).

Our first theorem is a generalization of Siebenmann's isotopy extension principle (Theorem 6.5 in \cite{siebenmann1972deformation}).

\begin{theorem}
\label{lipschitz isotopy}
Assume $B$ is a locally connected metric space and $X$ is a metric space that is locally compact and locally connected. Assume $V$ is a metric space such that $f_t\colon V\rightarrow X$ for $t\in B$ is a continuous family of open Lipschitz embeddings. Assume further that $C$ is a closed subset of $V$ with compact boundary in $V$ and that for all $t\in B$, $f_t(C)$ is closed in $X$. If $\mathcal{L}(V-C)$ holds, then for each $b\in B$, there exists a neighborhood $N_b$ of $b$, and a family of Lipschitz isomorphisms $F_t\colon X\rightarrow X$ $(t\in N_b)$ such that $F_t\circ f_b=f_t$ near $C$. Further if $C$ is compact, then one can take $F_t$ so that $F_t\circ f_b=f_b$ away from a compact neighborhood of $C$.
\end{theorem}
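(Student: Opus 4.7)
The plan is to mirror Siebenmann's proof of the topological isotopy extension theorem (Theorem 6.5 in \cite{siebenmann1972deformation}), replacing the topological deformation rule $\mathcal{D}$ with the Lipschitz rule $\mathcal{L}$ and checking that every step stays in the Lipschitz category.

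First I will use compactness of $\partial C$ and local compactness of $V$ and $X$ to fix open neighborhoods $U_1\subset\overline{U_1}\subset U_2$ of $\partial C$ in $V$ with $\overline{U_2}$ compact. By continuity of $t\mapsto f_t$ and openness of $f_b(V)$, there is a neighborhood $N_b$ of $b$ on which the composition $\phi_t\coloneqq f_b^{-1}\circ f_t$ is well defined on $\overline{U_2}$ and gives a continuous family of open Lipschitz embeddings close to the inclusion, with $\phi_b=\mathrm{id}$. Shrinking $N_b$ if necessary, the restriction $\phi_t|_{U_2\setminus C}$ is a continuous family of open Lipschitz embeddings of an open subset of $V-C$ into $V-C$, close to the inclusion.

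Next I will apply the rule $\mathcal{L}(V-C)$ to this restricted family to produce a continuous family of Lipschitz isomorphisms $\Psi_t\colon V-C\to V-C$ with $\Psi_b=\mathrm{id}$, equal to $\phi_t$ on an open collar neighborhood of $\partial C$ inside $V-C$ and equal to the identity outside a compact subset of $V$ contained in $\overline{U_2}$. With $\Psi_t$ in hand, I will piece $F_t\colon X\to X$ together by setting $F_t=f_t\circ f_b^{-1}$ on a neighborhood of $f_b(C)$ inside $f_b(V)$ (in particular on $f_b(U_1)$), $F_t=f_b\circ\Psi_t\circ f_b^{-1}$ on $f_b(V-C)$, and $F_t=\mathrm{id}$ outside $f_b(\overline{U_2})$. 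The matching conditions on $\Psi_t$ ensure these formulas agree on overlaps. Running the analogous argument on $f_t^{-1}\circ f_b$ yields a Lipschitz inverse, so each $F_t$ is a Lipschitz isomorphism of $X$ with $F_t\circ f_b=f_t$ near $C$ by construction. When $C$ is compact one enlarges $U_2$ to contain $C$, which confines the support of $F_t$ to the compact set $f_b(\overline{U_2})$, giving the last assertion.

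The hardest step will be the second one, the application of $\mathcal{L}(V-C)$. One has to arrange the open sets so that (i) $\phi_t|_{U_2\setminus C}$ genuinely takes values in $V-C$ rather than leaking back across $\partial C$, which forces $t$ to lie in a sufficiently small neighborhood of $b$; (ii) the deformation $\Psi_t$ can be prescribed to equal $\phi_t$ on an open collar of $\partial C$ inside $V-C$ even though this collar has non-compact closure in $V-C$ (its closure in $V$ meets $\partial C\subset C$); and (iii) the extension of $\Psi_t$ by the identity on $C$ meets $\phi_t$ Lipschitzly across $\partial C$, so that the piecewise $F_t$ is Lipschitz with controlled constants. Combining these localizations with Sullivan's Lipschitz deformation theory \cite{Sullivan,SiebenmannSullivan} is the technical heart of the argument.
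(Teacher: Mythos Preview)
Your overall plan---transport via $f_b$, apply the Lipschitz deformation rule near the boundary, and glue---is the right one and is what the paper does. But your step (ii) is not a technicality that Sullivan's theory absorbs: the rule $\mathcal{L}(V-C;K;W)$ requires the set $K$ on which the output agrees with the input to be \emph{compact in $V-C$}, and an open collar of $\partial C$ never is. No single application of $\mathcal{L}(V-C)$ produces a $\Psi_t$ equal to $\phi_t$ all the way up to $\partial C$, and without that your three defining formulas for $F_t$ are incompatible: when $C$ is noncompact, on $f_b(C)\setminus f_b(\overline{U_2})$ you are simultaneously prescribing $F_t=f_t\circ f_b^{-1}$ and $F_t=\mathrm{id}$. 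Your difficulty (iii) is a symptom of the same problem.

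The paper resolves this with an inversion trick that your outline is missing. Working in $V'=f_b(V)$ with $f_t'=f_t\circ f_b^{-1}$, one first chooses a closed $D$ with $C'=f_b(C)\subset\mathring D\subset D\subset V'$ and $\partial D$ compact, and applies $\mathcal{L}(V'-C')$ on a compact-closure neighbourhood $U$ of $\partial D$ to get a Lipschitz isomorphism $h_t'$ of $X$ with $h_t'=f_t'$ on a compactum $K_3\subset U$ and $h_t'=i$ outside a compactum $K_4\subset U$. The crucial move is then to set $g_t=f_t'\circ(h_t'|_U)^{-1}$: this $g_t$ equals the \emph{inclusion} on an inner compactum $K_1\supset\partial D$ and equals $f_t'$ off $K_4$. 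Defining $H_t=f_t'$ on $V'\setminus U$ and $H_t=g_t$ on $U$ yields a Lipschitz embedding which is $f_t'$ on a full neighbourhood of $C'$ but the identity near $\partial D$; one then takes $F_t=H_t$ on $\mathring D$ and $F_t=i$ on $(X\setminus D)\cup\mathring K_1$, using $H_t(\mathring D)=\mathring D$ (local connectedness of $B$) for bijectivity. The composition $f_t'\circ(h_t')^{-1}$ is exactly what converts ``equal to $f_t'$ on a compactum'' (which is all $\mathcal{L}$ gives) into ``equal to the identity on a compactum containing $\partial D$,'' and this is what makes the gluing with the identity on $X\setminus D$ well defined and Lipschitz---eliminating both of your difficulties (ii) and (iii) at once.
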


In addition to Perelman's proof of the stability theorem, the topological isotopy extension theorem has found applications in K-theory (see section 1 in \cite{Weiss-Bruce}). 

Note that our terminology might differ from other uses  (see Definition \ref{definition imp}). In particular, when we say ``Lipschitz embeddings'', we mean embedding in the Lipschitz ''category''. In particular, there is more data than just a topological embedding that is Lipschitz. We do this so that we follow the conventions in the literature and thus make the paper easier to read.

Making the natural addendum to $\mathcal{L}(X)$ (see above), and denoting the refined deformation principle by $\mathcal{L}^{*}(X)$, we prove our second main result, a Lipschitz fibration theorem.

\begin{theorem}
\label{lipschitz submersions}
Let $p\colon E\rightarrow X$ be a Lipschitz submersion, where $X$ is locally compact and locally connected. Assume $F=p^{-1}(b)$, where $b=p(y)$, is locally connected , locally compact and $\mathcal{L}^{*}(F)$ holds. If $C$ is a compact subset of $F$. Then, there exists a neighborhood $U$ of $C$ and a Lipschitz product chart $f\colon U\times N\rightarrow E$ about $U$ for $p$. Hence if  $p$ is proper and for each $x\in X$, $p^{-1}(x)$ is locally connected and $\mathcal{L}^{*}(p^{-1}(x))$ holds true, then $p$ is a Lipschitz bundle map.
\end{theorem}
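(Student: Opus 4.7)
The plan is to mirror Siebenmann's derivation of the fibration theorem (Corollary 6.14 in \cite{siebenmann1972deformation}) from the union lemma, using our Lipschitz union lemma (Lemma \ref{uniion lemma}) in place of its topological counterpart. The argument has two stages: first produce a Lipschitz product chart over a neighborhood of an arbitrary compact $C\subset F$, and then, under properness, upgrade this to local triviality of $p$.

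For the first stage, since $p$ is a Lipschitz submersion, every point $y\in C$ admits a Lipschitz product chart $f_y\colon V_y\times N_y\to E$ with $V_y$ an open neighborhood of $y$ in $F$ and $N_y$ an open neighborhood of $b$ in $X$. Compactness of $C$ lets us extract a finite subcover $V_{y_1},\dots,V_{y_n}$; after intersection we may assume all the $N_{y_i}$ equal a common neighborhood $N$ of $b$. We then induct on $k$: having built a Lipschitz product chart over $N$ on an open neighborhood of $V_{y_1}\cup\cdots\cup V_{y_k}$, we invoke Lemma \ref{uniion lemma} with this chart and with $f_{y_{k+1}}$ (possibly shrinking $N$) to obtain a Lipschitz product chart on an open neighborhood of $V_{y_1}\cup\cdots\cup V_{y_{k+1}}$. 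This is the step where the hypothesis $\mathcal{L}^*(F)$ enters, since the union lemma requires the deformation principle on $F$ in order to align the two charts on their overlap. After $n$ steps, taking $U$ to be the resulting open neighborhood of $C$ and shrinking $N$ one final time yields the desired chart $f\colon U\times N\to E$.

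For the second stage, assume $p$ is proper and all fibers satisfy the hypotheses. Fix $x\in X$ and set $C=p^{-1}(x)$, which is compact by properness. The first stage, applied to the fiber $F_x:=p^{-1}(x)$ in place of $F$, produces an open neighborhood $U$ of $C$ in $F_x$ and a Lipschitz product chart $f\colon U\times N\to E$ over a neighborhood $N$ of $x$. Because $p$ is proper and $C=p^{-1}(x)$, the sets $\{p^{-1}(N')\}$ as $N'$ ranges over neighborhoods of $x$ form a neighborhood basis of $C$ in $E$, so we may shrink $N$ to ensure that $p^{-1}(N)\subseteq f(U\times N)$. The map $f$ then furnishes a Lipschitz trivialization of $p$ over $N$, exhibiting $p$ as a Lipschitz bundle map.

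The main obstacle I anticipate is the inductive gluing step: one must verify that at each stage the hypotheses of the union lemma are genuinely met, and, crucially, control the Lipschitz data as the charts are restricted and the base neighborhood $N$ contracts. The refined rule $\mathcal{L}^*$ is designed precisely to keep these constants in check through successive gluings, and Corollary \ref{strengthening} should supply the strengthened isotopy extension needed to make the alignment uniform across the overlap. Once the union lemma is applied correctly, the remainder of the argument is a compactness and bookkeeping exercise.
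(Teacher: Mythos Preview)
Your argument is correct and follows the same route as the paper: cover $C$ by finitely many local Lipschitz product charts, glue them inductively via the Lipschitz union lemma (Lemma~\ref{uniion lemma}), and then use properness to shrink $N$ so that the chart is onto $p^{-1}(N)$; the paper carries out the last step via the closed-map formulation of properness rather than your neighborhood-basis formulation, but the two are equivalent. One clarification on your final paragraph: the refined principle $\mathcal{L}^{*}$ is not about controlling Lipschitz \emph{constants} (that is the separate rule $\mathcal{L}'$ in Section~\ref{Lipschitz constant control}); rather, property $(P)$ ensures that a Lipschitz \emph{isotopy} deforms to a Lipschitz isotopy, which is exactly what the union lemma needs to produce a chart that is Lipschitz jointly in the fiber and base variables, so no additional constant-tracking is required here.
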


We note that the topological analogue of the above theorem is a crucial step in the proof that  Alexandrov spaces are locally conical  (Theorem 4.4  in \cite{Vitali}, Theorem A in \cite{perelman1991alexandrov}, Theorem 1.4 \cite{elements}).

Lastly, we prove the Lipschitz analogue of Perelman's gluing theorem (Theorem $B$ in \cite{perelman1991alexandrov}, Theorem 4.6 in \cite{Vitali}) under the rule $\mathcal{L}(X)$. Indeed, the topological gluing theorem is a crucial step in the topological stability theorem. 

In what follows, denote by $\chi\colon \mathbb{R}^{\geq 0}\rightarrow \mathbb{R}^{\geq 0}$ increasing continuous functions such that $\chi(0)=0$.

\begin{theorem}
[Lipschitz Gluing Theorem]
\label{Lipschitz Gluing Theorem}
Let $X$ be a compact metric space that is locally connected and such that $\mathcal{L}(X)$ holds. Assume that $X$ is covered by finitely many open sets $\{U_{\alpha} \}_{\alpha \in \mathfrak{A}}$. Given a function $\chi_0$, there exists a function $\chi$ (depends on $X$, the cover $\{U_{\alpha}\}$ and $\chi_0$) such that the following holds. 

Given a  $\chi_0$-connected, metric compactum $\tilde{X}$, covered by open sets $\{\tilde{U}_{\alpha}\}_{\alpha\in \mathfrak{A}}$ and $\theta\colon X\rightarrow \tilde{X}$ is a $\delta$-approximation $(\delta$ is sufficiently small) and $\varphi_{\alpha}\colon U_{\alpha}\rightarrow \tilde{U}_{\alpha}$ are Lipschitz isomorphisms $\delta$-close to $\theta$. Then, there exists a bi-Lipschitz homeomorphism $\theta'\colon X\rightarrow \tilde{X}$ that is $\chi(\delta)$-close to $\theta$.
\end{theorem}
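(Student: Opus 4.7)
The plan is to glue the $\varphi_\alpha$'s one chart at a time by induction on the finite index $\mathfrak{A}$, using the Lipschitz isotopy extension (Theorem \ref{lipschitz isotopy}) in place of the topological one in Perelman's original argument. By compactness and local connectedness of $X$, refine $\{U_\alpha\}_{\alpha\in\mathfrak{A}}$ to nested finite open covers $\{V_\alpha\}$ and $\{W_\alpha\}$ with $\overline{V_\alpha}\subset W_\alpha$ and $\overline{W_\alpha}\subset U_\alpha$, so that $\{V_\alpha\}$ still covers $X$, and enumerate $\mathfrak{A}$ as $\{1,\dots,N\}$. The inductive claim is: for each $k\le N$ there is an open $O_k\supset \overline{V_1}\cup\cdots\cup\overline{V_k}$ and a bi-Lipschitz embedding $\psi_k\colon O_k\to \tilde{X}$ with $d(\psi_k,\theta)<\chi_k(\delta)$ on $O_k$, where $\chi_k$ depends only on $X$, the covers, and $\chi_0$. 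The base case $k=1$ takes $\psi_1:=\varphi_1$ restricted to any $O_1\subset W_1$ containing $\overline{V_1}$, and at the end $\theta':=\psi_N$ is defined on all of $X$.

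For the inductive step, given $\psi_{k-1}$ on $O_{k-1}$, form the transition map $h:=\varphi_k^{-1}\circ\psi_{k-1}$ on the open set $\Omega:=\psi_{k-1}^{-1}(\tilde{U}_k)\cap O_{k-1}\subset U_k$. Since both $\psi_{k-1}$ and $\varphi_k$ are close to $\theta$, $h\colon\Omega\to U_k$ is a Lipschitz embedding close to the inclusion, quantitatively in $\delta$. Choose a compact $C\subset\Omega$ whose interior contains a neighborhood of $\overline{V_k}\cap(\overline{V_1}\cup\cdots\cup\overline{V_{k-1}})$ and whose boundary lies well inside $\Omega$; local connectedness of $X$ (hence of $U_k$) allows such a padded $C$ to be found. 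Since $\mathcal{L}(X)$ passes to open subsets, $\mathcal{L}(U_k\setminus C)$ holds, and Theorem \ref{lipschitz isotopy} applies to a one-parameter family interpolating between $\mathrm{id}$ and $h$ furnished by the rule $\mathcal{L}$ itself (the Lipschitz analogue of the local contractibility of the homeomorphism group). This yields a Lipschitz automorphism $H\colon U_k\to U_k$ with $H=h$ near $C$ and $H=\mathrm{id}$ outside a compact neighborhood of $C$, and with $H$ close to $\mathrm{id}$ at a rate controlled by $\delta$. Define $\psi_k:=\varphi_k\circ H$ on $W_k$ and $\psi_k:=\psi_{k-1}$ on $O_{k-1}$ outside that compact neighborhood; the two prescriptions agree on overlaps because $\varphi_k\circ H=\varphi_k\circ h=\psi_{k-1}$ near $C$ and $H=\mathrm{id}$ elsewhere.

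Surjectivity of $\theta':=\psi_N$ onto $\tilde{X}$ follows because at each step $\psi_\alpha(O_\alpha)\supset\tilde{V}_\alpha$ (since $\psi_\alpha$ differs from $\varphi_\alpha$, which maps $U_\alpha$ onto $\tilde{U}_\alpha$, only by a small Lipschitz automorphism), and $\{\tilde{V}_\alpha\}$ covers $\tilde{X}$ for $\delta$ small enough, which is guaranteed by $\theta$ being a $\delta$-approximation, the $\chi_0$-connectedness of $\tilde{X}$, and the $\delta$-closeness of each $\varphi_\alpha$ to $\theta$. The final closeness bound $\chi(\delta):=\chi_N(\delta)$ is obtained by iteratively composing the moduli produced at each application of Theorem \ref{lipschitz isotopy}.

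The principal obstacle is the faithful tracking of the accumulated constants: each application of the Lipschitz isotopy extension inflates both the closeness estimate $\chi_k$ and the bi-Lipschitz distortion of $\psi_k$ in a controlled but nontrivial way, and one must verify that the recursion $\chi_k = \Phi(\chi_{k-1})$ remains dominated, after $N$ steps, by a single function $\chi$ depending only on $X$, the cover and $\chi_0$. A secondary subtlety is the construction of the one-parameter family required to invoke Theorem \ref{lipschitz isotopy}: interpolating between $\mathrm{id}$ and $h$ in the Lipschitz (rather than topological) category relies essentially on the rule $\mathcal{L}$, and the compact set $C$ must be positioned carefully within $\Omega$, using local connectedness, so that the two pieces of $\psi_k$ fit together consistently across the boundary of $W_k$.
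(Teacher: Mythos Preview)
Your inductive scheme---reduce the number of charts by adjusting one local homeomorphism to agree with another on overlaps---is exactly the paper's strategy, and the choice of nested refinements is the right scaffolding. The difference, and the gap, is in how you produce the automorphism $H\colon U_k\to U_k$ with $H=h$ near $C$ and $H=\mathrm{id}$ outside a compactum.

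You invoke Theorem~\ref{lipschitz isotopy}, which needs as input a \emph{continuous family} $f_t$ of open Lipschitz embeddings joining the inclusion to $h$. You say this family is ``furnished by the rule $\mathcal{L}$ itself (the Lipschitz analogue of local contractibility)'', but $\mathcal{L}(X;C;U)$ only supplies a continuous assignment $h\mapsto h'$; it does not by itself produce a path from $i$ to $h$ inside the space of Lipschitz embeddings. Constructing such a Lipschitz isotopy is essentially the hard content you are trying to extract, so the appeal is circular as written. Your own ``secondary subtlety'' paragraph flags this but does not resolve it.

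The fix is that the detour through Theorem~\ref{lipschitz isotopy} is unnecessary. The deformation principle $\mathcal{L}(X;C;\Omega)$ applied \emph{directly} to the single transition map $h=\varphi_k^{-1}\circ\psi_{k-1}$ (which is close to the inclusion) already outputs a Lipschitz isomorphism $h'\colon X\to X$ with $h'=h$ on $C$ and $h'=i$ outside a compact neighborhood of $C$; restricting $h'$ to $U_k$ gives your $H$ immediately. This is precisely what the paper does: it applies $\mathcal{L}(X)$ (equivalently, the Lipschitz Deformation Lemma, Proposition~\ref{Lipschitz Deformation Lemma}) straight to $\varphi_{\alpha_2}^{-1}\circ\varphi_{\alpha_1}$ to obtain the interpolating embedding $\psi$, then glues as you do. No one-parameter family is ever needed, and the ``principal obstacle'' of tracking accumulated isotopy constants disappears along with it---only the closeness modulus $\chi_k$ needs to be propagated, and that is handled by the canonicity clause in $\mathcal{L}(X)$.

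One further point you gloss over: global injectivity of the glued map $\psi_k$ on $O_k$ does not follow merely from the two pieces agreeing on the overlap; it uses that $\psi_k$ is $\chi_k(\delta)$-close to the approximation $\theta$ together with the $\chi_0$-connectedness of $\tilde X$. The paper defers this to Perelman's original argument, but you should at least signal where the hypothesis is consumed.
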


Two important remarks are in order. First, in the statement of Perelman's topological gluing theorem, the condition "MCS" can be replaced by the more general assumption that the spaces under consideration satisfy Siebenmann's deformation principle. Therefore Theorem \ref{Lipschitz Gluing Theorem} is truly the Lipschitz analogue of Perelman's gluing theorem. Second,
the union lemma allowed Perelman to obtain the strong gluing theorem (Complement to Theorem $B$ \cite{perelman1991alexandrov}, and Theorem $4.10$ in \cite{Vitali}).

Our article is organized as follows: In section \ref{section preliminaries and lipschitz deformation principle}, we discuss the preliminaries, give examples, and then define the Lipschitz deformation principle. In particular, we discuss the existence and non-existence of various  Lipschitz structures on various spaces that admit different types of curvature bounds. We further include references which are related to the examples. In section \ref{section Lipschitz Isotopy theorem}, we discuss how to obtain a relative formulation of Siebenmann's deformation principle. Then, we prove the Lipschitz isotopy extension theorem. In section \ref{section Relative Deformations, Lipschitz Submersions and Lipschitz Union Lemma}, we discuss Lipschitz submersions and prove the Lipschitz Union lemma. In section \ref{section applications} we prove the Lipschitz analogue of Perelman's deformation lemma and the gluing theorem and give a few remarks concerning Alexander isotopies on cones. Finally, in Section \ref{Lipschitz constant control}, we quickly develop the results parallel to the results in earlier sections. However, in this case, we control the Lipschitz constants.


\begin{ack}
This paper is part of the author's PhD work. I would like to express my  gratitude to my advisor, Fernando Galaz-Garcia, and to Martin Kerin for their support and guidance. I am also sincerely grateful to Alexander Lytchak for his very valuable comments, advice, support, and for numerous discussions. I would also like to thank Luis Guijarro, Vitali Kapovitch, Wilderich Tuschmann, and Burkhard Wilking for their support, suggestions, and insights. I am also thankful to Mauricio Che, Mo Dick Wong, and Alpar Meszaros for their valuable comments during the Metric Geometry seminar at Durham University. My thanks also goes to Jaime Santos Rodriguez, Kohei Suzuki, Massoumeh Zarei, and Yanpeng Zhi for their valuable comments and for fruitful conversations during the preparation of this manuscript. Finally, I would like to extend my gratitude to Michael Weiss for his valuable comments and for explaining the details of an important variation of Edward's wrapping-up process: the "Belt Buckle" trick.

\end{ack}


\section{Preliminaries, Lipschitz Deformation Principle and Examples}
\label{section preliminaries and lipschitz deformation principle}

First, we convene that, throughout this paper, \emph{Lipschitz} will be understood to be a local condition.

\begin{definition}
\label{definition imp}
Let $f\colon X\rightarrow Y$ be a map between metric spaces.  The map $f$ is \emph{Lipschitz} if for every point in $x$, there exists an open set $U$ of $x$ and a constant $L_x$, such that for every $x',x''\in U$, 
\[ d_{Y}(f(x'),f(x''))\leq L_xd_{X}(x',x'').\]

The map $f$ is a \emph{Lipschitz embedding} if $f\colon X\rightarrow f(X)$ is Lipschitz and $f^{-1}\colon f(X)\rightarrow X$ exists and is Lipschitz. In particular, $f$ is a Lipschitz isomorphism onto its image. A map $f$ is a \emph{Lipschitz immersion} if it is locally a Lipschitz embedding. 
\end{definition}

A useful result that we will use, along with its variations, often and tacitly is the following (see \cite{siebenmann1972deformation}).

\begin{prop}
Let $h\colon F\rightarrow F'$ be an open embedding between locally compact and locally connected $T_2$ spaces. Let $C$ be a compactum in $F$. Assume $g\colon F\rightarrow F'$ is an open embedding sufficiently near $h$. Then, $h(C)\subseteq g(F)$. Further, if $g=h$ outside $C$, and $g$ is sufficiently near $h$, then, $g(F)=h(F)$.

\end{prop}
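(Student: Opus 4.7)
My plan is to handle the two assertions in turn. Throughout, I interpret ``sufficiently near'' in the compact-open sense, which, since the spaces in question are metric, amounts to uniform closeness on the relevant compacta.

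For the first claim $h(C)\subseteq g(F)$, I would reduce to a pointwise question via compactness of $C$: for each $x\in C$, show $y:=h(x)\in g(F)$ whenever $g$ is close enough to $h$ on a fixed compact neighborhood of $x$, and then cover $C$ by finitely many such neighborhoods and take the minimum of the associated closeness bounds. For the pointwise step, use local compactness and local connectedness of $F$ to pick a connected open neighborhood $U_x\ni x$ with compact closure $\overline{U_x}$. The set $h(\partial U_x)$, where $\partial U_x=\overline{U_x}\setminus U_x$, is a compactum of $F'$ missing $y$, so by local connectedness of $F'$ one can pick a connected open neighborhood $B$ of $y$ disjoint from $h(\partial U_x)$; shrinking $B$ if necessary, arrange that for $g$ near enough $h$ on $\overline{U_x}$ both $B\cap g(\partial U_x)=\emptyset$ and $g(x)\in B$.

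The crux is then the observation $\partial g(U_x)\subseteq g(\partial U_x)$. This is where injectivity of $g$ and compactness of $g(\overline{U_x})$ are used: $g(\overline{U_x})$ is compact (hence closed in $F'$), so $\overline{g(U_x)}\subseteq g(\overline{U_x})$, and therefore $\overline{g(U_x)}\setminus g(U_x)\subseteq g(\overline{U_x}\setminus U_x)=g(\partial U_x)$ by injectivity. Consequently the connected set $B$ is disjoint from $\partial g(U_x)$, and hence lies entirely in $g(U_x)$ or entirely in $F'\setminus \overline{g(U_x)}$; since $g(x)\in B\cap g(U_x)$, the first alternative holds, giving $y\in B\subseteq g(U_x)\subseteq g(F)$. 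This connectedness/boundary separation is the heart of the argument and is the step I expect to be the main obstacle to formalize cleanly.

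For the second claim $g(F)=h(F)$, the hypothesis $g=h$ off $C$ immediately yields $g(F\setminus C)=h(F\setminus C)$, so the problem reduces to establishing $h(C)\subseteq g(F)$ (exactly the first claim) together with $g(C)\subseteq h(F)$. The latter is easier and does not need the connectedness argument: $h(C)$ is a compactum in the open set $h(F)$, hence has positive distance from $F'\setminus h(F)$, so any $g$ close enough to $h$ on $C$ automatically sends $C$ into $h(F)$. Combining these two containments with the agreement off $C$ gives $g(F)=h(F)$; all remaining work is routine bookkeeping to ensure the various closeness thresholds can be taken simultaneously on a single compact neighborhood of $C$.
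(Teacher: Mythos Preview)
Your argument is correct. The paper itself does not supply a proof of this proposition; it simply cites Siebenmann \cite{siebenmann1972deformation} and uses the result as a black box, so there is no in-paper proof to compare against. What you have written is essentially the classical argument one finds in that literature: the boundary/connectedness trick (the inclusion $\partial g(U_x)\subseteq g(\partial U_x)$ together with the fact that a connected set missing the frontier of an open set lies entirely inside or entirely outside) is exactly the mechanism Siebenmann uses.

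Two small remarks. First, your invocation of a metric (``positive distance'') is unnecessary and slightly at odds with the hypotheses as stated (locally compact, locally connected, $T_2$): the conditions ``$g(C)\subseteq h(F)$'' and ``$g(\partial U_x)\subseteq W$'' are literally subbasic compact-open neighborhoods of $h$, so no metric is needed. Second, in your compactness reduction you should be covering $C$ by the sets $h^{-1}(B_{x_i})$ (equivalently, covering $h(C)$ by the $B_{x_i}$), not by the $U_{x_i}$ themselves; the point is that the argument at $x$ actually captures the whole open set $B\ni h(x)$ inside $g(F)$, which is what lets finitely many choices suffice. With those cosmetic adjustments the proof is complete.
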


Now consider the following deformation rule.

\begin{definition}[Lipschitz Deformation Principle]
Let $X$ be a metric space (typically assumed to be locally compact and locally connected). Then we denote by $\mathcal{L}(X)$ (a statement that may or may not hold on $X$) the following:

$\mathcal{L}(X):$ For $U$ open in $X$ and $C$, a compactum in $U$. The following always holds. 

$\mathcal{L}(X;C;U):$ For every open Lipschitz embedding $h\colon U\rightarrow X$ sufficiently near the inclusion (in the compact-open topology), there exists a Lipschitz isomorphism $h'\colon X\rightarrow X$ such that $h'=h$ on $C$ and $h'=i$ (the inclusion) away from $U$. Further, the rule $h\rightarrow h'$ is \emph{canonical}. That is, it is a continuous function of $h$ for $h$ sufficiently near $i$. Moreover it sends the inclusion $i\colon U\rightarrow X$ to the identity $\mathrm{id}\colon X\rightarrow X$. 

\end{definition}

\begin{exmp}
The principle $\mathcal{L}(X)$ is a local principle for locally compact, and locally connected metric $X$. For example, if $X$ is a Lipschitz manifold, then $\mathcal{L}(X)$ holds true \cite{Sullivan}. To show this, it suffices to show that $X$ can be covered by open sets on which Sullivan's principle holds true. In particular, if $(X,g)$ is a connected compact Riemannian manifold, then with the induced length distance, $\mathcal{L}(X)$ holds true.

\end{exmp}

\begin{exmp}
\label{locallyfinitesimplicialcomplexes}
If $X$ is a locally finite simplicial complex (with the barycentric metric) then $\mathcal{L}(X)$ holds true \cite{SiebenmannSullivan}.  We note that given any compact Alexandrov space $X$, one can associate to it a simplicial complex, for which $X$ has the same Lipschitz homotopy type as the complex. More precisely, every open cover $\mathfrak{U}$ of $X$, admits a refinement $\mathfrak{U}'$, such that the nerve of $\mathfrak{U}'$, which we denote by $\mathfrak{N}(\mathfrak{U}')$, has the same Lipschitz homotopy type as $X$ \cite{goodcoverings}. Such refinements form "good coverings" for Alexandrov spaces (see \cite{goodcoverings}). In fact, the nerve complex has been useful to obtain several interesting results. For example, using the nerve complex, one can obtain Lipschitz homotopy finiteness results in the setting of Alexandrov spaces \cite{lipschitzhomotopyconvergence}. For further references we refer the reader to \cite{locallipschitzcontractibility,fujioka2023lipschitzhomotopyconvergencealexandrov,applicationsofgoodcoverings}.

\end{exmp}

\begin{exmp}
 If $X^n$ is an $n$-dimensional Alexandrov space, then there exists a connected open dense subset $M$ of $X$ that is a Lipschitz manifold. That is, every point $p$ of $M$ has a neighborhood bi-Lipschitz homeomorphic to an open region in $\mathbb{R}^n$  \cite{burago1992ad}. In particular, $\mathcal{L}(M)$ holds true. We intend to show in a subsequent work that $\mathcal{L}(X)$ holds true. We note that if $n=1$ or $2$, then $X^n$ admits a Lipschitz structure since, by the work of Perelman \cite{perelman1991alexandrov}, every Alexandrov space of dimension one and two is a topological manifold, and every such manifold admits a Lipschitz structure by the work of Sullivan \cite{Sullivan}. If $n=3$, then for any $p\in X^3$, $\Sigma_p(X)$, the space of directions at $p$, is a two-dimensional Alexandrov space, and hence admits a Lipschitz structure. For higher dimensions, the situation is not as straightforward, since non-manifold Alexandrov spaces become more common as dimension increases. For further references concerning the structure of low dimension Alexandrov spaces, we refer the reader to \cite{fernandoandluisazting,Fernando3,Fernando4, 3dsurvey, nunezzimbron,nunezzimbron3,lytchakricci,Torusactions,garcia-zarei,amann2019equivariantcohomologycohomogeneityalexandrov,mitsuishi2024collapsingthreedimensionalalexandrovspaces,mitsuishiandyamaguchi,shioyavolumecollapsedthree}.

\end{exmp}

\begin{exmp}
Note that neither Sullivan's deformation principle $\mathcal{L}$ nor Siebenmann's principle $\mathcal{D}$ are principles that one can take for granted. For example, as observed by Lytchak and Nagano \cite{Lytchak-Nagano}, there exists a 2 dimensional GCBA space $X$ (locally geodesically complete with curvature bounded by above), due to Kleiner \cite{kleiner}, such that $X$ admits a point with no conical neighborhood. Such spaces share many structural properties to Alexandrov spaces (see \cite{Lytchak-Nagano,LytchakNagano2}). For further references concerning the structure of spaces with upper curvature bounds, we refer the reader to \cite{lytchak3,lytchak5,lytchak6,lytchak7,lytchak9, stadlerI, stadlerII,  tadashi4,tadashi}.

\end{exmp}

\begin{exmp}
Given the previous examples, it is natural to wonder about the situation when a closed Alexandrov space $(X,d)$ admits, simultaneously, an upper and lower curvature bound, in the sense of Toponogov. In this case, $X$ is a space of bounded curvature \cite{NikolaevI,NikolaevII}. In which case, according to \cite{nikolaev}, $X$ admits a canonical Riemannian structure with a smooth atlas of regularity $C^{3,\alpha}$ for $\alpha \in (0,1)$. In fact, Nikolaev's approximation theorem \cite{nikolaev} asserts that any such space $X$ with curvature $K_1\leq \mathrm{curv}\leq K_1'$,  for every $K_2'>K_1'$ and $K_2<K_1$, one can find Riemannian metrics $g_i$ with $K_2\leq \mathrm{sec}_{g_i}\leq K_2'$  such that $(X,d_{g_i})$ converges, in the Lipschitz topology, to $(X,d)$.

\end{exmp}

\begin{exmp}
Complementing the previous examples, it is worth mentioning that some geodesic spaces with lower curvature bounds do not even admit an open dense subset that is a manifold and hence, neither Siebenmann nor Sullivan's theory applies immediately, not even locally. For example, Hupp, Naber and Wang showed, among many things, that certain RCD$(K,N)$ spaces do not have an open dense subset that is a manifold \cite{hupp2023lowerriccicurvaturenonexistence}. $\mathrm{RCD}(K,N)$ spaces are metric generalizations of Riemannian manifolds with lower Ricci curvature bounds.  For the convenience of the reader, we will mention how these spaces relate to Alexandrov spaces. Petrunin \cite{PetruninRCD} (see also the work of Zhang-Zhu \cite{ZhangZhu}) showed that $n$-dimensional Alexandrov spaces with curvature bounded below by $K$ satisfy the so called CD$((n-1)K,n)$ curvature condition. Combining these results with the work of Kuwae-Machigashira-Shioya \cite{Kuwae-Machigashira-Shioya}, it follows that Alexandrov spaces are RCD spaces (see also \cite{gigli2,non-collapsedrcdspaces}).  Later, Lytchak and Stadler \cite{lytchak7} proved the converse. Namely, they showed that if $(X,d,\mathcal{H}^2)$ is an RCD$(K,2)$ space, then it is an Alexandrov space. In particular, they settled a conjecture raised by Villani \cite{Villani}. The structure of $\mathrm{RCD}$ and related spaces is a very active field of research. See, for example, \cite{villani2009optimal, guijarro2016isometry, jiayinandweiexamples,CheegerColdingI,CheegerColdingII,CheegerColdingIII,JiayinWang,SemolaII,SemolaIII,Semola4,NaberCheegerJiang,Naber2,GarciaKellMondinoSosa,KapovithcXingyu,Deng,gigliandenrico,Zamorafiniteness,guijarro2016isometry,fundamentalgroupsofrcdspacesjaimesergio,invariantmeasuresandlowerrricci,sosa,universalcoverofrcspaces,HondaII,HondaI}.

\end{exmp}

Now that we have given various examples arising from different contexts, we return to the theory of deforming Lipschitz homeomorphisms. To that end, we first make the following remark.

\begin{remark}
    Assume $X$ is locally compact and locally connected.  Then $\mathcal{L}(X)$ holds true if and only if the following holds true. For $U$ open in $X$, and $C,C'$, compacta in $U$, such that $C'$ is a neighborhood of $C$, the following always holds. $\mathcal{L}(X;C,C',U):$  For every open Lipschitz embedding $h\colon U\rightarrow X$ sufficiently near the inclusion, the Lipschitz isomorphism $h'$ arising from the rule $h\rightarrow h'$ can be taken to be the inclusion outside $C'$.

\end{remark}

We conclude this section by discussing some conventions we will follow in this paper.

\subsection*{Conventions} Whenever we deal with $h$ and the induced map $h'$ arising from a deformation principle, we will always assume that $h'$ satisfies the properties mentioned in the principle. Also, we will often write $h$ for $h|$  (where $h|$ denotes $h$ restricted to a set that is usually open).

\section{Lipschitz Isotopies}
\label{section Lipschitz Isotopy theorem}

In this section, we shall prove a Lipschitz analogue of the isotopy extension theorem in \cite{siebenmann1972deformation}. Our proof is slightly more technical than Siebenmann's. This is due to a couple of reasons: First, as stated, Sullivan's deformation principle is slightly different than Siebenmann's $\mathcal{D}(X)$. Second, our statement offers a mild strengthening of the corresponding one in Siebenmann's paper.  That is, Siebenmann proves the isotopy extension theorem for when $f_b$ is the inclusion and does not immediately address whether $F_t\circ f_b=f_b$ away from a neighborhood of $C$ (when $C$ is compact). Third, we employ a ``thickening technique'' by controlling the borders of the sets so as to ensure that we can glue Lipschitz maps to a larger Lipschitz map (recall that our definition of Lipschitz is local). This idea will be useful, and used throughout the paper. In particular, it will be used in the proof of the main theorems. To the best of our knowledge, this technique does not appear in the literature.  Lastly, we note that we will also use ideas from \cite{siebenmann1972deformation}.

 To set the stage, we will first show that the Lipschitz deformation principle $\mathcal{L}(X)$ implies a relative deformation principle. Consequences of relative deformation principles are important and abound. For example, in the topological category, the "strong gluing theorem", a relative version of the important "gluing theorem" (see \cite{Vitali}) allowed Perelman to prove his stability theorem. Siebenmann used relative versions of his deformation principle to obtain deformation results, including but not limited to, the theory of foliations \cite{siebenmann1972deformation}. Edward's and Kirby \cite{DeformationsofSpaces} used relative versions of their deformation theory, to show, among many things, that the homeomorphism groups of certain manifolds is locally contractible in a relative manner.

\begin{prop}
Assume $X$ is a compact locally connected metric space such that $\mathcal{L}(X)$ holds. Then for $U$ open in $X$, $A,A'$ closed in $X$ such that $A'$ is a neighborhood of $A$, and $B$ compact in $U$, the following always holds:

$\mathcal{L}(X,A,A',B,U):$ For every open Lipschitz embedding $h\colon U\rightarrow X$ sufficiently near the inclusion (in the compact open topology) and such that $h$ is the inclusion on $A'\cap U$, then there exists a Lipschitz isomorphism $h'\colon X\rightarrow X$ such that $h'=h$ on $B$ and $h'=i$ on $X-U$ and $A$.

\end{prop}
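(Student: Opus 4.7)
The plan is to reduce the relative statement to the absolute principle $\mathcal{L}(X)$ by restricting $h$ to the complement of a closed neighborhood of $A$, exploiting the hypothesis $h = i$ on $A' \cap U$ to absorb the transition region between ``identity near $A$'' and ``equal to $h$ on $B$''.

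First I would use normality of the compact metric space $X$ (together with $A\subset \mathrm{int}(A')$, which holds because $A'$ is a neighborhood of $A$) to pick nested open sets $V_1,V_2$ with
\[
A \subset V_1 \subset \overline{V_1} \subset V_2 \subset \overline{V_2} \subset \mathrm{int}(A').
\]
Set $U' := U \setminus \overline{V_1}$, which is open in $X$, and $C := B \setminus V_2$, which is compact (as $V_2$ is open and $B$ is compact). The inclusion $\overline{V_1}\subset V_2$ gives $C\cap \overline{V_1}\subset C\cap V_2=\emptyset$, so $C\subset U\setminus\overline{V_1}=U'$. Provided $h$ is sufficiently near the inclusion $U\hookrightarrow X$, the restriction $h|_{U'}$ is sufficiently near the inclusion $U'\hookrightarrow X$ in the compact-open topology, so one may apply $\mathcal{L}(X;C;U')$ to $h|_{U'}$. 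This produces a Lipschitz isomorphism $h'\colon X\to X$ with $h'=h$ on $C$ and $h'=i$ on $X\setminus U'=(X\setminus U)\cup \overline{V_1}$. This immediately yields $h'=i$ on $X\setminus U$, and (using $A\subset V_1\subset\overline{V_1}$) on $A$; it also gives $h'=h$ on $C=B\setminus V_2$.

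It remains to show $h'=h$ on the leftover piece $B\cap V_2$. Since $V_2\subset A'$ and $B\subset U$, the hypothesis forces $h=i$ on $B\cap V_2$, so the task reduces to showing $h'=i$ on $B\cap V_2$. Points of $B\cap V_2\cap\overline{V_1}$ are already handled by the previous step. The delicate part is the ``transition annulus'' $B\cap(V_2\setminus\overline{V_1})\subset U'\cap\mathrm{int}(A')$, where $h|_{U'}$ agrees with the inclusion but where the output $h'$ is not a priori forced to be the identity by the bare statement of $\mathcal{L}(X;C;U')$. To conclude one appeals to the local character of the canonical deformation rule of Sullivan's construction: since $h|_{U'}$ equals the inclusion on the open subset $\mathrm{int}(A')\cap U'$ of $U'$, and the canonical rule $h\mapsto h'$ is produced by local formulas (compatible with sending the inclusion to the identity), $h'$ equals the identity on $\mathrm{int}(A')\cap U'$, in particular on the transition annulus.

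The main obstacle is precisely this last step, namely invoking locality of the canonical deformation rule: this is a standard but implicit feature of Sullivan's and Siebenmann's constructions, not spelled out in the abstract form of $\mathcal{L}(X)$. If one prefers to avoid the implicit appeal, the author's ``thickening technique'' offers a concrete alternative: introduce a third buffer $V_3$ with $\overline{V_2}\subset V_3\subset\overline{V_3}\subset\mathrm{int}(A')$, apply instead the refined principle $\mathcal{L}(X;C,C';U')$ with $C=B\setminus V_3$ and a compact neighborhood $C'$ of $C$ chosen inside $X\setminus\overline{V_2}$, and then glue the resulting Lipschitz isomorphism with the identity on the open cover $\{X\setminus\overline{V_1},\,V_2\}$; both pieces agree with $i$ on the thick open overlap $V_2\setminus\overline{V_1}$, so the glued map is a well-defined Lipschitz isomorphism satisfying all the required conditions.
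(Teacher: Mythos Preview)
Your strategy of \emph{shrinking} the domain to $U'=U\setminus\overline{V_1}$ is the opposite of what the paper does, and it is exactly this choice that produces the transition-annulus problem you correctly isolate. The paper instead \emph{enlarges}: it picks disjoint open sets $U_1\supset B$ (inside $U$) and $U_2\supset X\setminus U$, a relatively compact $V$ with $B\subset V\subset\overline V\subset U_1$, and defines $h_1$ on the open set $V\cup U_2\cup\mathring{A}'$ by $h_1=h$ on $V$ and $h_1=i$ on $U_2\cup\mathring{A}'$; the hypothesis $h=i$ on $A'\cap U$ makes this well defined. One then applies $\mathcal{L}(X)$ with compactum $B\cup(X\setminus U)\cup A$ sitting inside this enlarged domain. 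Because $B$ itself is part of the compactum and $h_1=h$ on $V\supset B$, the output satisfies $h'=h$ on all of $B$ with no residual annulus to discuss.

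Neither of your proposed repairs closes the gap. The appeal to ``locality of the canonical rule'' is not part of the abstract statement $\mathcal{L}(X)$ (which only demands continuity of $h\mapsto h'$ and $i\mapsto\mathrm{id}$), so it cannot be invoked here. The second repair also fails: with $C=B\setminus V_3$ and $C'\subset X\setminus\overline{V_2}$, every point of $B\cap\partial V_3$ lies in $C$, so any compact neighborhood $C'$ of $C$ must meet $B\cap(V_3\setminus\overline{V_2})$ whenever $B$ actually crosses $\partial V_3$ (e.g.\ $X=[0,1]$, $B=[0.4,0.6]$, $V_3=(0,0.5)$, $V_2=(0,0.3)$). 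On that piece $h'$ remains uncontrolled. Your final gluing over $\{X\setminus\overline{V_1},\,V_2\}$ is in fact vacuous: the refined principle already forces $h'=i$ on $\overline{V_2}$, so the glued map is $h'$ itself and the problem on $B\cap(V_3\setminus\overline{V_2})$ persists. The enlargement trick is the clean way out.
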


\begin{proof}
Find small open sets $U_1$ of $B$ in $U$ and $U_2$ of $X-U$ that are disjoint. Find an open $V$ such that $\overline{V}$ is compact and such that $B\subseteq V\subseteq \overline{V}\subseteq U_1\subseteq U$. Now define $h_1\colon V\cup U_2\cup \mathring{A}'\rightarrow X$ by setting it to be $h$ on $V$ and the inclusion on $U_2\cup \mathring{A}'$. By our assumptions on $h$, $h_1$ is well defined. Moreover, for $h$ sufficiently close to the inclusion, $h_1$ is an open Lipschitz embedding. Thus, for $h$ close to the inclusion, there exists a Lipschitz isomorphism $h'\colon X\rightarrow X$ such that $h'=h_1$ on $B\cup (X-U)\cup A$ and $h'$ is the inclusion away from $V\cup U_2\cup \mathring{A}'$.

\end{proof}

\begin{proof}[Proof of Theorem \ref{lipschitz isotopy}]
Fix $b\in B$ and consider $f_t'=f_t\circ f_b^{-1}\colon f_b(V)\rightarrow X$. Put $C'=f_b(C)$ and $V'=f_b(V)$. Clearly, $\partial{C}'$ is compact. Thus, one can find a set $D$, closed in $X$, and such that $C'\subseteq \mathring{D}\subseteq D\subseteq V'$. Further, one can choose $D$ so that $\partial{D}$ is compact. Since $\partial{D}$ is compact, and $V'$ is locally compact, it follows that there is an open set $U$, with compact closure so that $\partial{D}\subseteq U\subseteq \overline{U}\subseteq V'-C'$. Since $\partial{D}$ is compact, we may find compacta $K_1,K_2,K_3,K_4$, such that $K_i\subseteq \mathring{K}_{i+1}$, $K_4\subseteq U$ and $\partial{D}\subseteq \mathring{K}_1$. At last, find a closed neighborhood $Z$ of $C'$ in $\mathring{D}$ so that $Z\cap \overline{U}=\varnothing$. Observe that as $\overline{U}$ is compact, for $t$ sufficiently close to $b$, one has $f_t'(\overline{U})\subseteq V'-C'$. Provided $t$ is close enough to $b$, there exists a Lipschitz isomorphism $h_t'\colon X\rightarrow X$ such that $h_t'=f_t'$ on $K_3$ and $h_t'=i$ (the inclusion) away from $K_4$. Now, considering $h'_t|_{U}$, for $t$ sufficiently close to $b$, the map $g_t=f_t' \circ h_t'|_{U}^{-1}\colon U\rightarrow X$ is well defined. Indeed, for $t$ close enough to $b$, $g_t(U)=f_t'(U)$, $g_t=f_t'$ away from $K_4$ and $g_t$ is the inclusion on $K_1$. Now, find  an sufficiently small open set $O$ of $\partial{U}$ in $V'$ such that $O\cap K_4= \varnothing$. Thus, for $t$ sufficiently close to $b$, define $H_t\colon V'\rightarrow X$ to be
\[ H_t = \begin{cases} 
      f_t' &  (V'-U)\cup O \\
      g_t & U \\

   \end{cases}
\]

Observe that if $x\in O\cap U$, then $g_t(x)=f_t'(x)$ by construction of $g_t$. Hence $H_t$ is well defined and is an open Lipschitz immersion. Now we will check that for $t$ close to $b$, $H_t$ is an Lipschitz embedding. Indeed, recall that $g_t(U)=f_t'(U)$ for $t$ close to $b$. In which case, if $x=f_t'(w)$ for some $w\in O\cap U$. Then, $f_t'(w)=g_t(w)$. Hence, $f_t'^{-1}(x)=g_t^{-1}(x)$. Since $B$ is locally connected, for $t$ close to $b$, $H_t(\mathring{D})=\mathring{D}$ \cite{siebenmann1972deformation}. Thus, for $t$ close enough to $b$, define $F_t\colon X\rightarrow X$ by setting it to be $H_t$ on $\mathring{D}$ and the inclusion on $\mathring{K}_1\cup (X-D)$. Arguing as in the preceding paragraph, the map $F_t\colon X\rightarrow X$ is a Lipschitz isomorphism, and satisfies the desired conditions. In particular, $F_t\circ f_b=f_t$ on $f_b^{-1}(\mathring{Z})$. Lastly, observe that if $C$ was compact, then $Z$ and $D$ can be chosen to be compact. 
\end{proof}

\begin{remark}
\begin{enumerate}
\
    \item To prove the Lipschitz union lemma, Lemma \ref{uniion lemma}, one essentially only requires (in addition to refining the deformation principle) that $f_b$ is the inclusion.
    \item If $X$ is compact, then $F_t$ will be a bi-Lipschitz homeomorphism (globally Lipschitz with globally Lipschitz inverse).
    \item If $X$ is compact, or more generally just has finitely many components, and $B$ is not necessarily locally connected, then as in \cite{siebenmann1972deformation}, Theorem \ref{lipschitz isotopy} still holds.
    \item As in \cite{siebenmann1972deformation}, if $B=I^n$ then  one may take $N_b$ to be $I^n$, the $n$-cube.
\end{enumerate}
\end{remark}

\section{Lipschitz Submersions and Lipschitz Union Lemma}
\label{section Relative Deformations, Lipschitz Submersions and Lipschitz Union Lemma}

The general framework of the deformation theory in the Lipschitz category does not work as briefly as it does in the topological category. Let us clarify why this is the case. In the topological category, we have the following result \cite{siebenmann1972deformation} that is quite useful in the topological deformation theory.

\begin{prop}\label{proposition useful in top category}

Assume $F$, $F'$ and $B$ are metric spaces. Assume $f\colon B\times F\rightarrow B\times F'$ is continuous map that respects the projection onto the $B$ factor. Assume for each $t\in B$, the map $f_t(x)=proj_{F'}f(t,x)$ is an open embedding, where $proj_{F'}\colon B\times F'\rightarrow F'$ denotes the projection map onto $F'$. If $F'$ is locally compact, and $B$ is locally connected, then $f$ is an open embedding.

\end{prop}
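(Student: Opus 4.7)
The plan is to establish injectivity, continuity (which is given), and openness; together these make $f$ an open embedding, since an injective continuous open map is a homeomorphism onto its (open) image. Injectivity is immediate: $f(t,x)=f(t',x')$ forces $t=t'$ from the $B$-coordinate and then $x=x'$ by injectivity of $f_t$, so I focus on openness.

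To show $f$ is open, I would take an arbitrary basic open set $N\times W\subseteq B\times F$ and a point $(t_0,x_0)\in N\times W$, set $y_0:=f_{t_0}(x_0)$, and produce an open neighborhood of $f(t_0,x_0)=(t_0,y_0)$ inside $f(N\times W)$. Using local connectedness of $B$, shrink $N$ to a connected neighborhood of $t_0$; using local compactness of $F'$ together with the openness of $f_{t_0}(W)\subseteq F'$ (since $f_{t_0}$ is an open embedding), pick a compact neighborhood $K$ of $y_0$ with $K\subseteq f_{t_0}(W)$, and set $C:=(f_{t_0}|_W)^{-1}(K)$, a compactum in $W$. The crux is the claim that there is a neighborhood $N'\subseteq N$ of $t_0$ such that $K\subseteq f_t(W)$ for every $t\in N'$. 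Granting this, each $y\in\mathrm{int}(K)$ equals $f_t(x)$ for some $x\in W$, whence $(t,y)=f(t,x)\in f(N\times W)$, so $N'\times\mathrm{int}(K)$ is the desired open neighborhood of $(t_0,y_0)$ inside $f(N\times W)$.

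For the claim I would invoke the earlier proposition on open embeddings close to a given one, applied with domain $W$, codomain $F'$, reference embedding $h:=f_{t_0}|_W$, perturbation $g:=f_t|_W$, and compactum $C$; its conclusion gives $K=f_{t_0}(C)\subseteq f_t(W)$ as soon as $f_t|_W$ is sufficiently close to $f_{t_0}|_W$ on compacta. The step I expect to demand the most care is translating the hypothesis that $f\colon B\times F\to B\times F'$ is continuous into the uniform closeness $\sup_{x\in C}d_{F'}(f_t(x),f_{t_0}(x))\to 0$ as $t\to t_0$; this follows from a standard covering argument applied to the compact slab $\{t_0\}\times C$, using continuity of $f$ and metrizability of $F'$. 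Local connectedness of $B$ and local compactness of $F'$ enter exactly as one expects: the former enables the connected shrinking $N'$ so that the perturbation $t\mapsto f_t|_W$ remains in the regime where the cited proposition applies, while the latter produces the compact neighborhood $K$ of $y_0$ inside the open set $f_{t_0}(W)$.
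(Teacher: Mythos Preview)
The paper does not prove this proposition; it is quoted from Siebenmann \cite{siebenmann1972deformation} and followed only by a remark that a more general version appears there. So there is no in-paper argument to compare against, and one must assess your outline on its own merits.

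Your reduction to openness and the choice of a compact $K\subseteq f_{t_0}(W)$ with $C=(f_{t_0}|_W)^{-1}(K)$ are correct. The gap is the step where you invoke ``the earlier proposition on open embeddings close to a given one'' to conclude $K=f_{t_0}(C)\subseteq f_t(W)$. As stated in the paper, that proposition requires \emph{both} the domain and the codomain to be locally compact \emph{and locally connected} Hausdorff spaces. Here neither $F$ (hence $W$) nor $F'$ is assumed locally connected; only local compactness of $F'$ and local connectedness of $B$ are given. So the quoted proposition does not apply, and your sentence explaining that local connectedness of $B$ ``enables the connected shrinking $N'$ so that the perturbation $t\mapsto f_t|_W$ remains in the regime where the cited proposition applies'' misidentifies its role: that proposition's ``regime'' is purely a closeness condition on a single $g$ and has nothing to do with $N'$ being connected.

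In Siebenmann's argument the local connectedness of $B$ is used in a genuinely different way, as a substitute for local connectedness of $F'$. One first enlarges $C$ to a compact $D\subseteq W$ with $C\subseteq \mathring D$ (this uses only local compactness of $F'$, pulled back via the homeomorphism $f_{t_0}$), chooses $N'$ \emph{connected} and small enough that $f_t(\partial D)$ stays uniformly away from $K$ for all $t\in N'$, and then for each $y\in K$ runs a clopen argument on the set $T_y=\{t\in N': y\in f_t(D)\}$ inside the connected $N'$. Closedness of $T_y$ is the easy compactness step you already have; the point is that openness of $T_y$ is proved by a continuation argument in the parameter $t$ (using that $y\notin f_t(\partial D)$ throughout $N'$), not by appealing to a connected neighbourhood of $y$ in $F'$. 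Connectedness of $N'$ then forces $T_y=N'$, which is exactly your claim. Your proposal would become correct once this is substituted for the appeal to the single-perturbation proposition.
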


\begin{remark}
A more general version of the above proposition holds true (see \cite{siebenmann1972deformation}). 
\end{remark}

In general, in Theorem \ref{lipschitz isotopy}, we only have a continuous family of open embeddings $f_t$ that can be perturbed to another continuous family of Lipschitz isomorphisms $F_t$. Basic examples show that it need not be the case that the perturbed family will inherit  stronger regularity properties as the following example more clearly articulates.

\begin{exmp}If $F$ is a map of the form $F(x,t)=(f_t(x),t)$, then $F$ need not be a Lipschitz isomorphism, even if each $f_t$ is.  Therefore, Proposition \ref{proposition useful in top category} does not hold in the Lipschitz category.

\end{exmp}

Due to the preceding example, in order to obtain a Lipschitz deformation theory that is parallel to Siebenmann's, it is customary to refine the Lipschitz deformation deformation principle. To that end, one must define the notion of a Lipschitz isotopy \cite{SiebenmannSullivan}.

\begin{definition}
A \emph{Lipschitz isotopy} $F\colon I\times X\rightarrow I\times Y$, where $I,X$ and $Y$ are metric is an open embedding that respects the projection on the first coordinate and is a Lipschitz isomorphism onto its image.
\end{definition}

Due to the issue raised above, one makes the following addendum to the deformation statement $\mathcal{L}(X)$ \cite{SiebenmannSullivan}:

\begin{definition}[Refined Principle]
\label{refined principle}
$\mathcal{L}^{*}(X)\colon $ For $U$ open in $X$, and $C$ compactum in $U$, one appends to $\mathcal{L}(X;C;U)$ the following property.

$(P)\colon$ For $B$ a metric space, if $h_t\colon U\rightarrow X$, for $t\in B$ gives a Lipschitz isotopy such that the rule $h_t\rightarrow h_t'$ is well defined for all $t\in B$, then $h_t'\colon X\rightarrow X$ for $t\in B$ gives a Lipschitz isotopy.

\end{definition}
The above property is natural. For example, it is satisfied by Lipschitz manifolds \cite{Sullivan} and locally finite simplicial  complexes \cite{SiebenmannSullivan}. The (refined) principle, which we will denote by $\mathcal{L}^{*}(X)$ further yields the following results, a strengthening of the isotopy extension principle and a deformation which respects the ``bundle'' structure (cf. Theorem 6.1 in \cite{siebenmann1972deformation}). The proof of Theorem \ref{lipschitz isotopy}, combined with property $(P)$ yields the following corollaries.

\begin{corollary}
\label{strengthening}
Assume $B$ is a locally connected metric space and $X$ is a metric space that is locally compact and locally connected. Assume $V$ is a topological space such that $f_t\colon V\rightarrow X$ for $t\in B$ is a family of open Lipschitz embeddings giving a Lipschitz isotopy. Assume further that $C$ is a closed subset of $V$. Assume further that for all $t\in B$, $f_t(C)$ is closed in $X$, and that $\partial{C}$ is compact in $V$. If $\mathcal{L}^{*}(V-C)$ holds. Then for each $b\in B$, there exists a neighborhood $N_b$ of $b$, and a family of Lipschitz isomorphisms $F_t\colon X\rightarrow X$ $(t\in N_b)$ giving a Lipschitz isotopy such that $F_t\circ f_b=f_t$ near $C$. Further, if $C$ is compact then $F_t\circ f_b=f_b$ away from a compact neighborhood of $C$.
\end{corollary}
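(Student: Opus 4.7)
The plan is to re-run the proof of Theorem \ref{lipschitz isotopy} verbatim and, at every step, verify that the constructions actually produce Lipschitz isotopies rather than merely continuous families of Lipschitz isomorphisms. Two inputs make this upgrade possible: first, the hypothesis that $f_t$ itself is a Lipschitz isotopy; and second, property $(P)$ of $\mathcal{L}^{*}(V-C)$, which is designed precisely to ensure that the canonical deformation rule $h_t \mapsto h_t'$ furnished by the principle sends Lipschitz isotopies to Lipschitz isotopies.

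Begin by forming $f_t' = f_t \circ f_b^{-1}$ on $V' = f_b(V)$. Since $f_b$ is a single Lipschitz isomorphism onto its image, the map $(t,y)\mapsto (t, f_t \circ f_b^{-1}(y))$ factors through the product-type Lipschitz isomorphism $(t,y)\mapsto (t,f_b^{-1}(y))$ composed with the given Lipschitz isotopy $(t,x)\mapsto (t,f_t(x))$, so $f_t'$ is a Lipschitz isotopy. Applying $\mathcal{L}^{*}(V'-C')$ (which is equivalent to $\mathcal{L}^{*}(V-C)$ via $f_b$) to $f_t'$ near the compactum $K_3\subset K_4\subset U\subset V'-C'$, property $(P)$ yields a Lipschitz isotopy of $V'-C'$ which extends by the identity to a Lipschitz isotopy $h_t'\colon X\to X$ (the extension is by the identity off the compact set $K_4$, so local Lipschitz constants can be chosen uniformly in $t$ near $b$). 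Restricting to $U$, taking the inverse (the inverse of a Lipschitz isotopy, on its image, is again a Lipschitz isotopy), and composing with $f_t'$ shows that $g_t = f_t' \circ (h_t'|_U)^{-1}$ is a Lipschitz isotopy on $U$.

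The main obstacle is the two gluings: $H_t$ is formed by pasting $g_t$ on $U$ with $f_t'$ on $(V'-U)\cup O$, and $F_t$ by pasting $H_t$ on $\mathring D$ with the identity on $(X-D)\cup \mathring K_1$. In each case one must verify that the glued family $(t,x)\mapsto (t,F_t(x))$ admits a uniform-in-$t$ local Lipschitz constant, not merely continuity with Lipschitz slices. This is exactly what the \emph{thickening technique} accomplishes: the buffer set $O$ (disjoint from $K_4$, on which $g_t=f_t'$ by construction) and the buffer $\mathring K_1\subset U$ (on which $H_t$ is already the identity) furnish open overlaps on which the two pieces coincide, so local Lipschitz charts for the glued family can be taken inside either piece, inheriting uniform-in-$t$ Lipschitz constants from the two Lipschitz isotopies being pasted. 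The remaining verifications, including the addendum that when $C$ is compact one may arrange $F_t\circ f_b=f_b$ outside a compact neighborhood of $C$ by taking $Z$ and $D$ compact, are identical to the original proof of Theorem \ref{lipschitz isotopy}.
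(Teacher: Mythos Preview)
Your proposal is correct and takes essentially the same approach as the paper, which simply states that the proof of Theorem~\ref{lipschitz isotopy} combined with property~$(P)$ yields the corollary without spelling out details. You have correctly unpacked those details: property~$(P)$ upgrades the canonical rule $h_t\mapsto h_t'$ to a Lipschitz isotopy, and the thickening overlaps ($O$ and $\mathring K_1$) ensure the two gluings produce Lipschitz isotopies rather than mere continuous families.
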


\begin{corollary}
Assume $X\times B$ is a metric product that is locally compact, locally connected and $\mathcal{L}^{*}(X)$ holds. Assume $U=U_1\times U_2$ is an open subset of $X\times B$, and $C=C_1\times C_2$ is a compactum in $U$. If $h\colon U\rightarrow X\times B$ is an open Lipschitz embedding sufficiently near the inclusion and such that $h$ respects the projection onto the $B$ factor. Then, there exists an open Lipschitz embedding $h'\colon U\rightarrow X\times B$ such that $h'=i$ on $C$ and $h'$ is $h$ away from a compactum $K$ in $U$. Furthermore, $h'$ can be chosen so that it respects the projection onto the $B$ factor.
\end{corollary}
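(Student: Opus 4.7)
The plan is to apply $\mathcal{L}^{*}(X)$ fiberwise and then assemble the resulting family into a global Lipschitz isomorphism using property $(P)$. Because $h$ respects the projection onto $B$, one can write $h(x,s)=(h_s(x),s)$ where, for each $s\in U_2$, the map $h_s\colon U_1\to X$ is an open Lipschitz embedding; for $h$ sufficiently close to the inclusion, each $h_s$ is close to the inclusion $U_1\hookrightarrow X$, uniformly on compacta of $U_2$.

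Fix a compact neighborhood $C_1'$ of $C_1$ in $U_1$ and apply the refined canonical rule $\mathcal{L}(X;C_1,C_1',U_1)$ fiberwise, obtaining Lipschitz isomorphisms $\varphi_s\colon X\to X$ with $\varphi_s=h_s$ on $C_1$ and $\varphi_s=i$ outside $C_1'$. Canonicity of the rule makes $s\mapsto \varphi_s$ continuous in $s$, and property $(P)$, together with the fact that $h$ is itself a fiberwise Lipschitz isotopy, upgrades this family to a Lipschitz isotopy. Consequently $\Phi(x,s):=(\varphi_s(x),s)$ is a Lipschitz isomorphism $X\times U_2\to X\times U_2$. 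Define $h':=\Phi^{-1}\circ h\colon U\to X\times B$; since $h$ has image in $X\times U_2$ and $\Phi^{-1}$ is a Lipschitz isomorphism, $h'$ is an open Lipschitz embedding, and it manifestly respects the projection onto $B$. On $C=C_1\times C_2$, the equality $\varphi_s=h_s$ on $C_1$ gives $\varphi_s^{-1}(h_s(x))=x$, hence $h'=i$ on $C$. For $x$ lying outside a compact neighborhood $K_1$ of $C_1'$ in $U_1$ (and $h$ near enough to $i$ so that $h_s(x)\notin C_1'$), $\varphi_s$ is the identity near $h_s(x)$, and thus $h'(x,s)=h(x,s)$.

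The principal obstacle is that the construction so far only provides agreement $h'=h$ on $(U_1\setminus K_1)\times U_2$, which is not relatively compact in $U$ when $U_2$ fails to be. To produce a genuinely compact $K\subset U$ outside which $h'=h$, one needs an additional cutoff in the $B$-direction. This is handled by the thickening technique employed in the proof of Theorem \ref{lipschitz isotopy}: choose nested compacta $C_2\subset \mathring{C_2'}\subset C_2'\subset \mathring{C_2''}\subset C_2''\subset U_2$, perform the fiberwise deformation only over $C_2''$, and glue to $h$ across the annular band $C_2''\setminus\mathring{C_2'}$. Property $(P)$ ensures that the glued map remains a Lipschitz isomorphism through the transition; the final $h'$ then agrees with $h$ outside the compactum $K=K_1\times C_2''$, and all the desired properties follow, including the preservation of the projection onto the $B$-factor.
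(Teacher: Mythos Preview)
Your approach is essentially the same as the paper's: write $h(x,s)=(h_s(x),s)$, apply $\mathcal{L}(X;C_1,U_1)$ fiberwise, and use property $(P)$ to assemble the resulting family into a Lipschitz isotopy $\Phi$. The paper then simply writes ``one can modify $h'$ to obtain the desired map''; your $\Phi^{-1}\circ h$ makes this modification explicit, and you go further by isolating the genuine subtlety---that $K_1\times U_2$ need not be compact---and sketching the cutoff in the $B$-direction needed to obtain a compact $K$. Your final gluing paragraph is somewhat terse (the two pieces do not literally agree on the annular band, so one must arrange the input family to equal the inclusion for $s$ outside $C_2''$ before invoking canonicity and $(P)$), but this is no sketchier than the paper's own ``one can modify''.
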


\begin{proof}
Write $h(x,t)=(h_t(x),t)$. Provided $h$ is close enough to the inclusion, it follows that there exists a Lipschitz isomorphism $h_t'\colon X\rightarrow X$ such that $h_t'=h$ on $C_1$ and $h_t'$ is the inclusion away from $U_1$. Hence, by assumption, $h'(x,t)=(h_t'(x),t)$ is a Lipschitz isotopy. Now, one can modify $h'$ to obtain the desired map.

\end{proof}

Now we will introduce the notion of a \emph{Lipschitz submersion}  (c.f. Definition 6.8 in \cite{siebenmann1972deformation}, and page 59 in \cite{kirby-siebenmann}). Note that topological submersions enjoy many properties and have shown to be useful (see \cite{kirby-siebenmann}). They are, in particular, a weaker form of a fiber bundle. We devote the rest of this paper to showing that a proper Lipschitz submersion with fibers satisfying $\mathcal{L}^{*}(X)$ is indeed a Lipschitz fiber bundle. In what follows, we will endow the product space with the sum metric (of course, one may also choose another equivalent one).

\begin{definition}
Let $E$ and $X$ be be metric spaces. A Lipschitz map $p\colon E\rightarrow X$ is a \emph{Lipschitz submersion} if for each $y\in E$ there exists an open subset $U$ of $y$ in $F=p^{-1}p(y)$ and a neighborhood $N$ of $p(y)$ in $X$ and an open Lipschitz embedding $f\colon U\times N\rightarrow E$ onto a neighborhood of $y$ such that the following holds true.
\begin{enumerate}
    \item $p\circ f\colon U\times N\rightarrow N$ is the projection map.
    \item $f(u,p(F))=u$ for all $u\in U$.
\end{enumerate}

We will refer to the map $f\colon U\times N\rightarrow E$ as a \textit{Lipschitz product chart about} $U$ for $p$.

\end{definition}
\begin{exmp}
A bi-Lipschitz homeomorphism is a Lipschitz submersion.

\end{exmp}

\begin{exmp}
Projection maps are Lipschitz submersions.

\end{exmp}

\begin{exmp}
If $X^n$ is a compact Alexandrov space and $f\colon X^n\rightarrow \mathbb{R}^n$ is an admissible map, then near its regular points it is a Lipschitz submersion.

\end{exmp}

 Now we shall give a proof of the Lipschitz analogue of Lemma 6.12 in \cite{siebenmann1972deformation}. The following lemma was used to give a proof of the Union lemma in \cite{siebenmann1972deformation}. We remark that there is no proof there. We give a proof here only to further illustrate the usefulness of property $(P)$. Our proof of the union lemma is for the most part different than the one in \cite{siebenmann1972deformation}.

\begin{lemma}
Assume $F$ and $B$ are metric spaces, let $p\colon F\times B\rightarrow B$ denote the projection map. Fix $b\in B$. Identify $F$ with $p^{-1}(b)$ in the obvious way. Let $U$ be an open subset of $F$ such that $\overline{U}$ is compact in $F$. Let $C\subseteq U$ be a compact set in $U$ and  let $f\colon U\times N\rightarrow F\times B$ be a product chart about $F$. Assume $F$ is locally compact, locally connected and $\mathcal{L}^{*}(F)$ holds. Then there exists a neighborhood $N'$ of $b$ and a product chart $g\colon F\times N'\rightarrow F\times B$ about $F$ such that $g=f$ near $C\times b$ and further,  $g=\mathrm{id}$ outside $K\times N'$, where $K$ is some compact neighborhood of $C$.
\end{lemma}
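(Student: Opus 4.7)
The plan is to reduce the statement to a fiberwise application of the refined deformation principle $\mathcal{L}^{*}(F)$ and then harvest the joint Lipschitz regularity from property $(P)$ of Definition \ref{refined principle}. Since $f$ is a product chart about $F$ and condition $(2)$ of the product chart definition forces $f_b = \mathrm{id}|_U$, I first write $f(u,t) = (f_t(u), t)$, where each $f_t \colon U \to F$ is an open Lipschitz embedding and $f_b$ is the inclusion. Because $\overline{U}$ is compact and $f$ is continuous, the family $f_t$ converges uniformly on $\overline{U}$ to the inclusion $U \hookrightarrow F$ as $t \to b$; in particular $f_t$ is arbitrarily close to the inclusion in the compact-open topology for $t$ in a sufficiently small neighborhood of $b$. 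Using that $F$ is locally compact and $C \subseteq U$ is compact, pick a compact neighborhood $C'$ of $C$ with $C' \subseteq U$.

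Next, I would apply the relative form $\mathcal{L}^{*}(F; C, C', U)$ of the refined principle (recorded in the Remark after the Lipschitz deformation principle). For $t$ in a small enough neighborhood $N' \subseteq N$ of $b$, the canonical rule $f_t \mapsto f_t'$ yields a Lipschitz isomorphism $f_t' \colon F \to F$ with $f_t' = f_t$ on $C$, $f_t' = \mathrm{id}$ on $F \setminus C'$, and $f_b' = \mathrm{id}_F$ (the last since the rule sends inclusions to identities). Now define
\[
g \colon F \times N' \to F \times B, \qquad g(x,t) = (f_t'(x), t).
\]
The decisive step is to observe that $f|_{U \times N'}$ is itself a Lipschitz isotopy, so property $(P)$ of $\mathcal{L}^{*}(F)$ applies and produces a Lipschitz isotopy $t \mapsto f_t'$; this is exactly the assertion that $g$ is an open Lipschitz embedding.

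Finally I would verify the product chart conditions and the boundary behaviour: $p \circ g$ is the projection onto $N'$, and $g(x,b) = (x,b)$ because $f_b' = \mathrm{id}_F$, so $g$ is a Lipschitz product chart about $F$. The equality $f_t' = f_t$ on $C$ forces $g = f$ on $C \times N'$, and in particular near $C \times \{b\}$; the equality $f_t' = \mathrm{id}$ off $C'$ forces $g = \mathrm{id}$ off $C' \times N'$, so taking $K = C'$ completes the proof. The only real obstacle is the passage from pointwise-in-$t$ to joint Lipschitz control, which in the topological setting is a soft consequence of Proposition \ref{proposition useful in top category} but in the Lipschitz category genuinely fails for generic continuous families of Lipschitz isomorphisms; property $(P)$ of the refined principle is introduced precisely to bypass this obstruction, and the proof above is essentially a clean application of it.
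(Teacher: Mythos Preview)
Your proposal is correct and follows essentially the same route as the paper: write $f(u,t)=(f_t(u),t)$, apply the refined principle $\mathcal{L}^{*}(F)$ fiberwise (in its relative form $\mathcal{L}(F;C,C',U)$) to obtain $f_t'\colon F\to F$, and then invoke property~$(P)$ so that $g(x,t)=(f_t'(x),t)$ is a Lipschitz isotopy. Your write-up is in fact more careful than the paper's terse version, explicitly recording why $f_b$ is the inclusion, why the rule is defined for $t$ near $b$, and why $g$ satisfies the product chart axioms.
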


\begin{proof}
Define $f_t\colon U\rightarrow F$ to be an open Lipschitz embedding such that, $f(u,t)=(f_t(u),t)$ for all $u\in U$ and $t\in N$. Find a neighborhood $N$ of $b$  family $F_t\colon F\rightarrow F$ of Lipschitz isomorphisms, giving a Lipschitz isotopy such that $F_t=f_t$ near $C$  and further, $F_t=\mathrm{id}$ (identity) away from a compact neighborhood $K$ of $C$. Now, define $g$ to be $g(x,t)=(F_t(x),t)$.
\end{proof}

\begin{lemma}[Lipschitz Union Lemma]
\label{uniion lemma}
Let $p\colon E\rightarrow X$ be a Lipschitz submersion, where $X$ is  locally compact and locally connected. Assume $F=p^{-1}(b)$, $b=p(y)$  is locally compact, locally connected, $U$ and $V$ are open subsets of $F$ and that we have Lipschitz product charts $f\colon U\times N_1\rightarrow E$ and $g\colon V\times N_2\rightarrow E$ about  $U$ and $V$ respectively, for $p$. Assume $U$ and $V$ are open neighborhoods of compacta $A$ and $B$ in $F$ (respectively) and that $\mathcal{L}^{*}(F)$ holds true. Then there exists a Lipschitz product chart $h\colon W\times N\rightarrow E$, where $W$ is an open neighborhood of $A\cup B$ in $F$  and such that $h=f$ near $A\times b$ and $h=g$ near $(B-U)\times b$. 
\end{lemma}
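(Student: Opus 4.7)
The plan is to merge the two product charts by modifying $g$ on a controlled portion of $V$ so that the modification agrees with $f$ on the relevant overlap, and then to glue the modified $g$ with $f$ along overlapping opens that together cover $A\cup B$. The engine will be Corollary~\ref{strengthening} applied to the transition isotopy $g_t^{-1}\circ f_t$, together with property $(P)$ furnished by $\mathcal{L}^*(F)$ to preserve the product structure.

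First I would choose precompact open neighborhoods $A\subseteq W_A$ with $\overline{W_A}$ compact and contained in $U$, and $B\subseteq W_B$ with $\overline{W_B}$ compact and contained in $V$, and set $W:=W_A\cup W_B$; this is the desired open neighborhood of $A\cup B$ in $F$. Automatically $\overline{W_A}\cap(B-U)=\varnothing$, and $C:=\overline{W_A\cap W_B}\subseteq \overline{W_A}\cap\overline{W_B}\subseteq U\cap V$ is compact and disjoint from $B-U$. I would then thicken $C$ slightly to a compact set $C'\subseteq U\cap V$ with $\partial C'$ compact in $U\cap V$ and $C'\cap(B-U)=\varnothing$.

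Next, with $f_t:=f(\cdot,t)$, $g_t:=g(\cdot,t)$ and $t$ running in a small neighborhood $N\subseteq N_1\cap N_2$ of $b$, I define the transition isotopy $\psi_t(x):=g_t^{-1}(f_t(x))$ on an open neighborhood of $C'$ in $U\cap V$. Openness of $g$ and the identities $f_b=\mathrm{id}|_U$, $g_b=\mathrm{id}|_V$ make $\psi_t$ well-defined, and $(x,t)\mapsto(\psi_t(x),t)$ is a Lipschitz isotopy with $\psi_b$ the inclusion $U\cap V\hookrightarrow V$. Apply Corollary~\ref{strengthening} with ambient space $V$ and closed set $C'$: $\partial C'$ is compact in $U\cap V$, $\psi_t(C')$ is compact and hence closed in $V$, and $\mathcal{L}^*((U\cap V)-C')$ holds by the locality of $\mathcal{L}^*$. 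The output is a Lipschitz isotopy $\Psi_t\colon V\to V$ of Lipschitz isomorphisms, defined for $t$ in a (possibly smaller) neighborhood of $b$, with $\Psi_b=\mathrm{id}$, $\Psi_t=\psi_t$ on a neighborhood $\mathring{Z}$ of $C'$, and $\Psi_t=\mathrm{id}$ outside a compact neighborhood of $C'$ in $V$ which I shrink to remain disjoint from $B-U$.

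Setting $\tilde g(x,t):=g(\Psi_t(x),t)$ yields a Lipschitz product chart about $V$ for $p$ with $\tilde g=f$ on $\mathring{Z}\supseteq W_A\cap W_B$ and $\tilde g=g$ on a neighborhood of $B-U$. I then define
\[
 h(x,t) := \begin{cases} f(x,t), & x\in W_A,\\ \tilde g(x,t), & x\in W_B, \end{cases}
\]
on $W\times N$; the two prescriptions coincide on $W_A\cap W_B\subseteq\mathring{Z}$, so $h$ is unambiguous, and because Lipschitz is local here and the pieces are Lipschitz product charts agreeing on the open overlap, $h$ is a Lipschitz product chart about $W$ for $p$. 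Finally, $h=f$ near $A\times\{b\}\subseteq W_A\times\{b\}$ and $h=g$ near $(B-U)\times\{b\}$, as required. The main obstacle is the compactness and boundary bookkeeping inside $U\cap V$: one must verify that $C'$ can actually be chosen compact with compact boundary in $U\cap V$, that the ``quiet zone'' furnished by Corollary~\ref{strengthening} can be shrunk to avoid the compactum $B-U$, and that the piecewise $h$ assembles into a Lipschitz embedding rather than merely a continuous or piecewise Lipschitz map. These are precisely the points addressed by the nested-compacta/thickening technique developed in the proof of Theorem~\ref{lipschitz isotopy}, which transfers directly to the present situation.
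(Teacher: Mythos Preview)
Your proposal is correct and follows essentially the same route as the paper: you form the transition isotopy $g_t^{-1}\circ f_t$, extend it via the $\mathcal{L}^*$-isotopy extension (Corollary~\ref{strengthening}) to a compactly supported Lipschitz isotopy, precompose $g$ with it to obtain a chart $\tilde g$ agreeing with $f$ on the overlap, and glue. The only cosmetic difference is that the paper extends the isotopy to all of $F$ (obtaining $G\colon F\times N_4\to F\times N_4$) rather than just to $V$, and organizes the bookkeeping with nested compacta $A_i,B_i$ in place of your $W_A,W_B,C,C'$; the injectivity of the glued $h$ is dispatched, as you anticipate, by shrinking $W$ and $N$.
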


\begin{remark}
In applications, we will concern ourselves with the case where $F$ is compact.
\end{remark}

\begin{proof}
Assume $A\cap B\neq \varnothing$. Find compacta $A_1,A_2,A_3,A_4$ in $U$ and $B_1,B_2,B_3,B_4$ in $V$ such that $A\subseteq \mathring{A}_1\subseteq A_1\subseteq \mathring{A}_2\subseteq A_2\subseteq \mathring{A}_3\subseteq A_3 \subseteq \mathring{A}_4\subseteq A_4\subseteq U$ and $B\subseteq \mathring{B}_1\subseteq B_1\subseteq \mathring{B}_2\subseteq B_2\subseteq \mathring{B}_3\subseteq B_3\subseteq \mathring{B}_4\subseteq B_4\subseteq V$.  Put $W_0=\mathring{A}_2\cap \mathring{B}_2$ and $K=A_3\cap B_3$. Now find an sufficiently small open neighborhood $O$ of $\partial{W}_0$, with compact closure such that $\overline{O}\subseteq U\cap V$. Since $f$ and $g$ are Lipschitz product charts, for $N_3$, a sufficiently small neighborhood of $b$, we can consider the composition $g^{-1}\circ f\colon (\mathring{A}_4\cap \mathring{B}_4)\times N_3\rightarrow (U\cap V)\times (N_1\cap N_2)$. Since $g^{-1}\circ f$ is a Lipschitz isotopy, there exists a neighborhood $N_4$ of $b$ and a bijective Lipschitz isotopy $G:F\times N_4\rightarrow F\times N_4$ such that $G=g^{-1}\circ f$ on $\overline{W}_0\times N_4$ and $G=\mathrm{id}$ away from $K\times N_4$.  Hence, for $N_4$, a very small enough neighborhood of $b$, we can construct a Lipschitz product chart $\varphi\colon V\times N_4\rightarrow E$ as follows. Put $Z_1=((V-\overline{W}_0) \cup O)\times N_4$  and $Z_2= W_0\times N_4$. Define $\varphi$ by
 \[ \varphi=\begin{cases} 
      g \circ G& Z_1 \\
      f &   Z_2 \\
   \end{cases}
\]

Now we will show that $\varphi$ is well defined and is a Lipschitz product chart provided $N_4$ is small enough. It is clear that, provided $N_4$ is small enough, $\varphi$ is well defined. Further, observe that if $e=gG(x,t)=f(w_0,s)$, where $(x,t)\in Z_1$ and $(w_0,s)\in Z_2$ then $G^{-1}g^{-1}(e)=(x,t)=G^{-1}g^{-1}f(w_0,s)=(w_0,s)=f^{-1}(e)$. Now we follow  \cite{siebenmann1972deformation}. Put $W= \mathring{A}_1\cup \mathring{B}_1$. For $N=N_4$ small enough, define $h\colon W\times N\rightarrow E$ by setting it to be $f$ on $\mathring{A}_1\times N$ and $\varphi$ on $\mathring{B}_1\times N$. Clearly $h$ is an open Lipschitz immersion. Furthermore, if $W$ and $N$ are small enough, then $h$ will be an embedding. Hence, the result follows.
\end{proof}

Now, we will prove Theorem \ref{lipschitz submersions}. The proof is similar to the corresponding one in \cite{siebenmann1972deformation}. We make a simplification.

\begin{proof}[Proof of Theorem \ref{lipschitz submersions}]
For each $y\in C$, there exists a sufficiently small neighborhood $U_y$ of $y$ in $p^{-1}(b)$  containing $y$ and a neighborhood $N_y$ of $b$ in $X$ and a Lipschitz product chart $f_y:U_y\times N_y\rightarrow E$.  Since $C$ is compact, there are only finitely many $U_{y_1},...,U_{y_n}$. Thus, the union lemma implies that there exists an open neighborhood $W$, where $W$ contains $C$ and an Lipschitz product chart $h\colon W\times N\rightarrow E$. Hence, if $p^{-1}(x)$ is compact, then there exists a Lipschitz product chart $h\colon p^{-1}(x)\times N\rightarrow E$ for $p$. It remains to find a neighborhood $N'$ of $x$ in $X$ such that $h(p^{-1}(x)\times N')=p^{-1}(N')$. Indeed, since $h$ is an open map, it follows that $h(p^{-1}(x)\times N)$ is open in $E$. Thus, $E-h(p^{-1}(x)\times N)$ is closed in $E$. Furthermore, $p(E-h(p^{-1}(x)\times N))$ is closed in $X$. Note that since $h$ is a product chart, $x$ is not in $A=p(E-h(p^{-1}(x)\times N))$. Thus, set $N'=N\cap (X-A)$. Clearly $N'$ is an open set in $X$. What is more, since both $N$ and $(X-A)$ contain $x$, it follows that $N'$ is an open neighborhood of $x$. It remains to check that $h(p^{-1}(x)\times N')=p^{-1}(N')$. By definition of product chart, it suffices to check the containment $p^{-1}(N')\subseteq h(p^{-1}(x)\times N')$. Indeed, if $w\in p^{-1}(N')$ then $p(w)\in N'$. Thus, $p(w)\notin A$ and so, $w\in h(p^{-1}(x)\times N')$. The proof of the theorem is now complete.
\end{proof}

\section{Applications}
\label{section applications}
\subsection{Perelman's Deformation Lemma and Lipschitz Gluing Theorem}

The topological gluing theorem is, along with the topological fibration theorem, crucial ingredients for the topological stability theorem. In particular, as mentioned in the introduction, the gluing theorem gives mild conditions as to when one can glue local homeomorphisms near a approximation, to a global homeomorphism that remains near the approximation. In this section we shall prove the Lipschitz analogue. The deformation lemma (Lemma 4.7 in \cite{Vitali} and Assertion $1$ in \cite{perelman1991alexandrov}) is the technical topological result that is used in the proof of the gluing theorem.  Indeed, the topological gluing theorem follows from it. Note that in the deformation lemma, one doesn't really require that $X$ is an $MCS$ space, just that it is locally compact, and satisfies Siebenmann's deformation principle $\mathcal{D}(X)$. 

Before proceeding, we first comment on the proof of the gluing theorems. 
In the topological category, Perelman used the "deformation lemma".  In the Lipschitz category, due to the nature of Sullivan's deformation principle, one can make do without this lemma.

To ease readability, we shall use the the terminology as in Perelman's original argument. To that end, we denote by $\chi$ various positive increasing continuous functions defined for sufficiently small arguments.

Although the deformation lemma is not strictly required for the proof of the Lipschitz gluing theorem, in this section, we shall first prove the Lipschitz analogue of the deformation lemma.  The reasons for this are two fold. First, we give a proof for the sake of completeness and convenience of the reader. Second, it is not clear to us why in the original argument (see proof of Assertion $1$ in \cite{perelman1991alexandrov}), in the notation of Perelman's, that $\varphi_1$ is well defined on $V\backslash W$ (it is defined on $U\backslash \overline{W}$).  Thus our proof is slightly different.

\begin{prop}[Lipschitz Deformation Lemma]

\label{Lipschitz Deformation Lemma}
Let $X$ be a locally compact, locally connected metric space such that $\mathcal{L}(X)$ holds. Let $W,V,U$ be open sets satisfying $\overline{W}\subseteq V\subseteq \overline{V}\subseteq U$. If $h\colon U\rightarrow X$ is an open Lipschitz embedding sufficiently near the inclusion. That is, $h$ is $\delta$-close to the inclusion $i$ (for $\delta$ a sufficiently small value), then there exists an open Lipschitz immersion $\tilde{h}\colon U\rightarrow X$, $\chi(\delta)$-close to the inclusion such that $\tilde{h}=i$ on $U\backslash V$ and $\tilde{h}=h$ on $W$. 

\end{prop}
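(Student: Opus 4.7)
The plan is to apply the refined deformation principle $\mathcal{L}(X;C,C';V)$ (recorded in the remark preceding the statement) to the restriction $h|_V$, taking $C$ to be a compactum sandwiched between $\overline{W}$ and $V$, and then to restrict the resulting global Lipschitz isomorphism of $X$ back to $U$.

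Concretely, I would first nest an additional layer: by local compactness (and the fact that $\overline{W}$ is relatively compact in the setting of interest, e.g.\ the Lipschitz gluing theorem where $X$ itself is compact), choose compacta $C$ and $C'$ with $\overline{W}\subseteq \mathring{C}\subseteq C\subseteq \mathring{C}'\subseteq C'\subseteq V$. Viewing $h|_V\colon V\to X$ as an open Lipschitz embedding at compact-open distance at most $\delta$ from the inclusion $V\hookrightarrow X$, for $\delta$ small enough the refined principle produces a Lipschitz isomorphism $g\colon X\to X$ with $g=h$ on $C$ and $g=i$ on $X\setminus C'$. Set $\tilde h:=g|_U\colon U\to X$. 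Then $\tilde h=h$ on $\overline{W}\supseteq W$, and since $U\setminus V\subseteq X\setminus C'$ one has $\tilde h=i$ on $U\setminus V$. As the restriction of a Lipschitz isomorphism to an open subset, $\tilde h$ is automatically an open Lipschitz embedding (hence an open Lipschitz immersion). This bypasses the difficulty the author notes in Perelman's original argument, since $g$ is defined and is the identity on all of $U\setminus \overline{W}$ by construction, so there is no issue gluing across $\partial W$.

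The main obstacle will be the quantitative bound $d(\tilde h,i)\leq \chi(\delta)$. The rule $h\mapsto g$ is \emph{canonical}: by the definition of $\mathcal{L}(X)$ it is continuous in the compact-open topology and sends the inclusion to the identity. Because $g$ coincides with the identity outside the compactum $C'$, any modulus of continuity for $g$ on $C'$ upgrades to a global one, and from the canonicity of the rule one extracts an increasing continuous function $\chi$ with $\chi(0)=0$, depending only on $X$ and the nesting (and hence only on $X,W,V,U$). Restricting to $U$ preserves the estimate and yields the required $\chi(\delta)$-closeness of $\tilde h$ to the inclusion, completing the proof.
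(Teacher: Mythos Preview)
Your argument is correct under the extra hypothesis that $\overline{W}$ is compact, which you flag explicitly and which is satisfied in the intended application (the Lipschitz gluing theorem, where $X$ is compact). With that in hand, applying $\mathcal{L}(X;C,C';V)$ to $h|_V$ and restricting the resulting global Lipschitz isomorphism $g$ to $U$ does exactly what is required; the $\chi(\delta)$ estimate indeed follows from canonicity together with $g=i$ off the compactum $C'$.

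The paper takes a different route. Rather than enclosing $\overline{W}$ in a compactum, it works near $\partial V$: it chooses nested compacta $C_1\subset C_2\subset C_3\subset C_4\subset U\setminus\overline{W}$ with $\partial V\subset\mathring{C}_1$ and a small collar $O$ of $\partial W$ disjoint from $C_4$, then invokes the construction from the proof of Theorem~\ref{lipschitz isotopy} to get an open Lipschitz embedding $h_1\colon U\setminus\overline{W}\to X$ with $h_1=i$ on $C_3$ and $h_1=h$ off $C_4$. The map $\tilde h$ is then defined piecewise as $i$ on $(U\setminus V)\cup\mathring{C}_1$, $h_1$ on $V\setminus\overline{W}$, and $h$ on $W\cup O$. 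This implicitly requires $\partial V$ to be compact (so that the $C_i$ exist), a different unstated compactness hypothesis from yours. Your approach is shorter and has the bonus that $\tilde h$, being the restriction of a Lipschitz isomorphism of $X$, is automatically an open Lipschitz \emph{embedding}, not merely the immersion claimed in the statement; the paper's glued map is only asserted to be an immersion since global injectivity of the three-piece definition is not checked.
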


\begin{proof} Find sufficiently small compacta $C_1,C_2,C_3,C_4$ and an open set $O\supseteq \partial{W}$, with $O\subseteq V$, such that $C_i\subseteq \mathring{C}_{i+1}$, $\partial{V}\subseteq \mathring{C_1}$,  $C_4\subseteq U\backslash \overline{W}$ and $O\cap C_4= \varnothing$. By (the proof of) Theorem \ref{lipschitz isotopy}, for $\delta$ sufficiently small, there exists an open Lipschitz embedding $h_1\colon U\backslash \overline{W}\rightarrow X$  such that $h_1=i$ on $C_3$ and $h_1=h$ away from $C_4$. Now define $\tilde{h}$ as follows:

\[ \tilde{h} = \begin{cases} 
      i &  (U\backslash V)\cup \mathring{C}_1 \\
      h_1 & V\backslash \overline{W} \\
      h & W\cup O    

   \end{cases}
\]

\end{proof}

 Our proof of the Lipschitz gluing theorem is an adaptation of Perelman's. We will follow Perelman's proof for the most part (see also \cite{Vitali}). Though, we will make some changes. The proof is similar in essence to the proof of Lemma \ref{uniion lemma}.

\begin{proof}[Proof of Theorem \ref{Lipschitz Gluing Theorem}]
We will will induct on the size of $\mathfrak{A}$. If $|\mathfrak{A}|=1$ then the result is true. Now, fix $\alpha_{1}$ and $\alpha_{2}$ in $\mathfrak{A}$. Find open sets $U_1^0,U_1^1,U_1^2,U_1^3,U_1^4$ in $U_{\alpha_1}$,  $U_2^0,U_2^1,U_2^2,U_2^3,U_2^4$ in $U_{\alpha_2}$ and a sufficiently small $O$ in $U_2^0$ such that the following holds.

\begin{enumerate}
    \item  For $k=1,2$, $U_k^4\Subset U_k^3\Subset U_k^2\Subset U_k^1\Subset U_k^0\Subset U_{\alpha_k}$.
    \item $X\backslash \bigcup_{\alpha \neq \alpha_1,\alpha_2} U_{\alpha} \subseteq U_1^4\cup U_2^4$.
    \item $\partial{(U_1^1\cap U_2^1)}\subseteq O$ and $O\cap (U_1^2\cap U_2^2)=\varnothing$.

\end{enumerate}

For $\delta$ small enough, one has $\varphi_{\alpha_2}^{-1}\varphi_{\alpha_1}(U_1^1\cap U_2^1)\subseteq U_{\alpha_2}$. Moreover, the map $\varphi_{\alpha_2}^{-1}\circ  \varphi_{\alpha_1}\colon U_1^1\cap U_2^1\rightarrow U_{\alpha_2}$ is  $3\delta$-close to the inclusion. Since $\mathcal{L}(X)$ holds true, it is easy to see that, provided $\delta$ is small enough, one obtains an open Lipschitz embedding $\psi\colon U_1^1\cap U_2^1\rightarrow U_{\alpha_2}$ that is close to the inclusion, and such that $\psi = \varphi_{\alpha_2}^{-1}\circ \varphi_{\alpha_1}$ on $U_1^3\cap U_2^3$ and $\psi= i$ on $(U_1^1\cap U_2^1)\backslash (U_1^2\cap U_2^2)$ (c.f. proof of Lemma \ref{uniion lemma}). If $\delta$ is small enough, $\psi$ extends to an open Lipschitz embedding $\psi$ on $U_2^0$ as follows. Indeed, set

\[ \overline{\psi} = \begin{cases} 
      i &  (U_2^0\backslash (U_1^1\cap U_2^1)) \cup O \\
      \psi & U_1^1\cap U_2^1 \\

   \end{cases}
\]

Observe, if $x\in O\cap (U_1^1\cap U_2^1)$ then $x\notin U_1^2\cap U_2^2$. Hence, $\psi(x)=x$. Thus, $\overline{\psi}$ is well defined. As $\psi$ is equal to the inclusion away from a compactum, it follows that if $\delta$ is small enough,  $\overline{\psi}$ is an open Lipschitz embedding. Now define $\varphi'\colon U_1^3\cup U_2^3\rightarrow \tilde{X}$ by the following rule:
\[ \varphi' = \begin{cases} 
      \varphi_{\alpha_1} &  U_1^3 \\
       \varphi_{\alpha_2}\circ \overline{\psi} & U_2^3 \\

   \end{cases}
\]

Observe that if $x\in U_1^3\cap U_2^3$ then $\overline{\psi}(x)=\psi(x)=\varphi_{\alpha_2}^{-1}\varphi_{\alpha_1}(x)$. Hence $\varphi'$ is well defined. Now we complete the proof as in (Theorem $B$ in \cite{perelman1991alexandrov},  Gluing Theorem 4.6 in \cite{Vitali}). Namely, if $\delta$ small enough, $\varphi'$ is an open Lipschitz embedding on $U_1^4\cup U_2^4$. What is more, provided $\delta$ is small enough, $\tilde{X}\backslash \bigcup_{\alpha\neq \alpha_1,\alpha_2}\tilde{{U}}_{\alpha}$ is contained in $\varphi'(U_1^4\cup U_2^4)$. Hence by induction a Lipschitz isomorphism $X\rightarrow \tilde{X}$ results. Since $\tilde{X}$ and $X$ are compact, this map is bi-Lipschitz and is a map with the properties we seek.

\end{proof}

\subsection{Remark on Alexander Isotopies}

\label{last section}
The Alexander isotopy is a useful tool in geometric topology. For instance, Kirby and Edward \cite{DeformationsofSpaces} used the Alexander isotopy to show that a compact manifold has locally contractible homeomorphism group. More generally, Siebenmann \cite{siebenmann1972deformation}, used an Alexander isotopy on cones to show that an open embedding near an inclusion, can be isotoped through a family of open embeddings to the inclusion. In this section, we will show that the Alexander isotopy can be taken to be through Lipschitz isomorphisms. The following proposition follows from Proposition 3 in \cite{gauld}.

\begin{prop}
Let $X$ be a metric space. If $X$ is compact then the topology on $cX$ (the open cone on $X$) is metrizable by a metric $d$ such that  

\begin{enumerate}
    \item $d(\alpha x,\alpha y)=\alpha d(x,y)$ for all $\alpha \in [0,\infty)$ and $x,y\in cX$.
    \item $d(\alpha y,\beta y)=|\alpha-\beta|d(v,y)$, where $v$ is the tip of the cone and $\alpha,\beta\in[0,\infty)$ and $y\in cX$.
\end{enumerate}

\end{prop}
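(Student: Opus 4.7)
The plan is to realize $cX$ as the quotient $(X\times[0,\infty))/(X\times\{0\})$, writing $[x,t]$ for the class of $(x,t)$, $v$ for the collapsed class, and taking scaling to act by $\alpha\cdot[x,t]=[x,\alpha t]$. I would equip $cX$ with the classical Euclidean cone metric
\[
d([x,s],[y,t]) := \sqrt{s^{2} + t^{2} - 2st\cos\bigl(\min\{\pi,d_X(x,y)\}\bigr)}.
\]
This descends to the quotient because when $s=0$ (resp.\ $t=0$) the expression becomes $t$ (resp.\ $s$), independent of $x$ and $y$, so the formula does not see the identification of $X\times\{0\}$ to a point.

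The two scaling identities fall out of the formula. For (1), substituting $(\alpha s,\alpha t)$ in the radicand factors $\alpha^{2}$ out; for (2), taking $y=[x,t]$ the cosine collapses to $\cos(0)=1$ and the formula gives $d(\alpha y,\beta y)=|\alpha t-\beta t|=|\alpha-\beta|\,t$, while $d(v,y)=t$ directly from the definition.

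The substantive step is to show that $d$ is genuinely a metric and that it induces the quotient topology. Non-negativity, symmetry, and separation are immediate. The main obstacle is the triangle inequality: the key input is that the truncation $a_{ij}:=\min\{\pi,d_X(x_i,x_j)\}$ itself satisfies the triangle inequality on $X$. Given three points $p_i=[x_i,s_i]$, one then realizes Euclidean comparison points $\tilde p_1,\tilde p_2,\tilde p_3\in\mathbb{R}^{2}$ at radii $s_i$ with the angles at the origin set to $a_{12}$ and $a_{23}$ (placing $\tilde p_2$ angularly between $\tilde p_1$ and $\tilde p_3$); the law of cosines gives $|\tilde p_i-\tilde p_j|=d(p_i,p_j)$ for the adjacent pairs and $|\tilde p_1-\tilde p_3|\ge d(p_1,p_3)$ thanks to the triangle inequality on $(a_{ij})$, and then the Euclidean triangle inequality on $\tilde p_1,\tilde p_2,\tilde p_3$ yields the one for $d$. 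This is the standard Alexandrov-type comparison construction for Euclidean cones over arbitrary metric spaces; cf.\ \cite{Burago-Burago-Ivanov}.

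For the topology: on each region $X\times[\varepsilon,1/\varepsilon]$ a Taylor expansion of the formula shows $d$ is bi-Lipschitz equivalent to the product metric $d_X+|s-t|$, so the $d$-topology agrees with the quotient topology on $cX\setminus\{v\}$. At $v$ itself one computes $d(v,[x,t])=t$, so the $d$-ball of radius $\varepsilon$ about $v$ is precisely the slab $\{[x,t]:t<\varepsilon\}$. This is where compactness of $X$ is used: by the tube lemma, any open neighborhood of $X\times\{0\}$ in $X\times[0,\infty)$ contains such a slab, so every quotient-open neighborhood of $v$ contains some $d$-ball, and the two topologies coincide at $v$ as well.
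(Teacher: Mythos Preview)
The paper does not actually prove this proposition; it simply cites Proposition~3 of \cite{gauld}. Your Euclidean cone metric is a perfectly good alternative construction, and the two scaling identities and the topology argument (including the use of compactness via the tube lemma at the vertex) are correct.

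Two points in your sketch deserve tightening. First, the comparison step for the triangle inequality is not quite right as written: the inequality $|\tilde p_1-\tilde p_3|\ge d(p_1,p_3)$ can fail when $a_{12}+a_{23}>\pi$. For instance, with $s_1=s_2=s_3=1$, $a_{12}=\pi$, $a_{23}=\pi/2$, $a_{13}=\pi$ one gets $|\tilde p_1-\tilde p_3|=\sqrt{2}<2=d(p_1,p_3)$. The standard fix (and what the reference you cite actually does) is to treat the case $a_{12}+a_{23}\ge\pi$ separately, showing $d(p_1,p_2)+d(p_2,p_3)\ge s_1+s_3\ge d(p_1,p_3)$; this follows from convexity of $s_2\mapsto|\tilde p_1-\tilde p_2|+|\tilde p_2-\tilde p_3|$ together with nonnegativity of its derivative at $s_2=0$, which is $-\cos a_{12}-\cos a_{23}\ge 0$ precisely when $a_{12}+a_{23}\ge\pi$. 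Second, the claim that $d$ is bi-Lipschitz to the product metric on $X\times[\varepsilon,1/\varepsilon]$ is too strong when $\operatorname{diam}(X)>\pi$, since the truncation $\min\{\pi,d_X\}$ collapses all large distances; however, what you actually need---that the identity is a homeomorphism there---follows directly from the decomposition $d^2=(s-t)^2+4st\sin^2(a/2)$, which forces $t\to s$ and then $d_X(x,y)\to 0$ whenever $d\to 0$.
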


Now we will recall the Alexander isotopy.

\begin{prop}
Assume $X$ is a compact, locally connected metric space and $h\colon cX\rightarrow cX$ is a Lipschitz isomorphism such that $h$ is the identity away from a fixed compactum in $cX$. Then there exists an isotopy $h_t$, where $0\leq t\leq 1$ consisting of Lipschitz isormophisms $h_t\colon cX\rightarrow cX$ such that $h_0=h$ and $h_1=id$ (the identity on $cX$).
\end{prop}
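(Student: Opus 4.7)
The plan is the Lipschitz analogue of the classical Alexander trick on the cone, with a final time-reversal so that $h_0 = h$ and $h_1 = \mathrm{id}$. The preceding proposition equips $cX$ with a metric satisfying the scaling identity $d(\alpha y, \alpha z) = \alpha d(y, z)$ for $\alpha \geq 0$, which immediately implies that for every $\lambda > 0$ the radial rescaling $r_\lambda \colon cX \to cX$, $r_\lambda(y) := \lambda y$, is a bi-Lipschitz homeomorphism with Lipschitz constants $\lambda$ and $1/\lambda$. As a first step I would normalise $h$: pick a compactum $K$ off which $h$ is the identity and choose $R > 0$ with $K \subset \{y : d(v, y) < R\}$. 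After conjugating $h$ by $r_{1/R}$ at the start (and conjugating back at the end), we may assume $R < 1$, and since the closed set $\{y : d(v, y) \geq 1\}$ is disjoint from $K$, the map $h$ is the identity on a full open neighbourhood of it.

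Next, for $t \in (0, 1]$ I would define
\[
g_t(y) \;=\; \begin{cases} r_t\circ h\circ r_{1/t}(y), & d(v, y) \leq t,\\ y, & d(v, y) \geq t, \end{cases}
\]
set $g_0 := \mathrm{id}$, and finally put $h_t := g_{1-t}$, so $h_0 = h$ and $h_1 = \mathrm{id}$. Well-definedness on the overlap $d(v, y) = t$ is where the normalisation pays off: there $d(v, r_{1/t}(y)) = 1 > R$, so $h$ fixes $r_{1/t}(y)$ and both branches return $y$. That each $g_t$ is a bijection follows from the analogous formula with $h^{-1}$ in place of $h$, which also serves as the candidate inverse.

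The core verification is that each $g_t$ is a Lipschitz isomorphism. On $\{d(v, y) < t\}$ this is immediate, as $g_t$ is a composition of the bi-Lipschitz maps $r_{1/t}$, $h$ and $r_t$; on $\{d(v, y) > t\}$ it is the identity. The delicate case is an interface point $y_0$ with $d(v, y_0) = t$, and this is where the main obstacle lies: the two branches agree there by continuity, but one still needs a Lipschitz bound on a whole neighbourhood. The resolution is precisely the thickening/buffer philosophy already used in the paper: $h$ is the identity on the open set $\{d(v, z) > R\}$, so pulling back by $r_t$ shows that $g_t$ is the identity on the entire open annulus $\{Rt < d(v, y) < t\}$. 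Together with the outer identity region this covers an open neighbourhood of the interface, so $g_t$ actually equals the identity near every boundary point and the local Lipschitz property is trivial there. The same argument applied to $h^{-1}$ gives that $g_t^{-1}$ is locally Lipschitz.

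It remains to check that $(t, y) \mapsto g_t(y)$ is continuous on $[0,1]\times cX$. Continuity on $(0,1]\times cX$ is clear from the formula. For continuity at $t = 0$: if $y \neq v$ then $d(v, y) > t$ for all sufficiently small $t$, so $g_t(y) = y$ eventually; and $g_t(v) = r_t(h(v))$ has $d(v, g_t(v)) = t\cdot d(v, h(v))$ by the scaling identity, which tends to $0$. The time reversal $h_t := g_{1-t}$ then yields the isotopy required, and conjugating back by $r_R$ undoes the initial normalisation without spoiling any of the properties just established.
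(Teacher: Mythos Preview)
Your argument is correct, but it is more elaborate than the paper's. The paper simply sets $h_t=\chi_t\, h\, \chi_t^{-1}$ for $t\in[0,1)$ (where $\chi_t$ is the scaling by $1-t$) and $h_1=\mathrm{id}$, with no piecewise definition, no preliminary normalisation, and no interface to check. This works because $cX$ is the \emph{open} cone: every positive radial rescaling is already a global bi-Lipschitz self-homeomorphism of $cX$, so the conjugate $\chi_t h\chi_t^{-1}$ is a Lipschitz isomorphism of all of $cX$ as a composition of three such maps, and the support of $h_t$ is $(1-t)K$, which shrinks to the cone point as $t\to 1$. Your version is essentially the closed-ball form of the Alexander trick, where the scaling does not surject and one must extend by the identity; this forces you to normalise so that $R<1$ and to verify the two branches match across $\{d(v,y)=t\}$. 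Both approaches are valid, and yours would transplant to settings where the rescaling is not globally defined, but for the open cone the paper's formulation is the shorter route.
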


\begin{proof}
Define for each $t\in [0,1)$, the map $\chi_t\colon cX\rightarrow cX$ by $\chi_t(sx)=(1-t)sx$. Here, $s\in [0,\infty)$ and $x\in X$. Observe, for $s,s'\in [0,\infty)$ and $x,y\in X$, $d(\chi_t(sx),\chi_t(s'y))=(1-t)d(sx,s'y)$. Similarly, the inverse of $\chi_t$, $\chi_t^{-1}$, has Lipschitz constant $1/(1-t)$. Thus, define $h_t$ by setting it to be $\chi_th\chi_t^{-1}$ for $t\in [0,1)$ and for $t=1$, set $h_t$ to be the identity on $cX$.
\end{proof}

\section{Deformations with Lipschitz Constant Control}

\label{Lipschitz constant control}
  In this section, we will assume that $X$ is a locally compact and locally connected metric space. Sullivan's principle $\mathcal{L}(X)$, as we have defined earlier is a statement that does not take into account the Lipschitz constants.  Therefore, it is desirable to obtain an analogue of the principle that takes into account the Lipschitz constants. Let us illustrate how one might do this.  In analogy to the Edwards and Kirby theory \cite{DeformationsofSpaces}, using Sullivan's immersion device instead of the Torus, one is able to obtain natural versions of Sullivan's deformations principles in which one has more control on the Lipschitz constants (see \cite{tukia-vaisala, jouni, tukia-vaisala2,luukainenrespectfulquasiconformal,bi-Lipschitzconcordanceimpliesbi-Lipschitzisotopy} and references therein for proofs of these facts (and more) and for very nice expositions of Sullivan's theory). For further references on related work, we refer the reader to \cite{lippproximationsofembeddings,luukainenlipschitzembeddings,quasiconformalmanifolds}

\begin{prop} \cite{tukia-vaisala,tukia-vaisala2,jouni}
Let $U$ be an open subset of $\mathbb{R}^n$ and let $B$ be a compact subset of $U$ and $B'$ a compact neighborhood of $B$ in $U$. If $h\colon U\rightarrow \mathbb{R}^n$ is an open Lipschitz embedding that is sufficiently near the inclusion. That is, it is $\delta$-close to the inclusion ($\delta$ is sufficiently small) and is locally $L$-bilipschitz, then there exists constants $C=C(n),K=K(n)\geq 1$ and a $L_1=CL^{K}$-bi-Lipschitz homeomorphism $h'\colon\mathbb{R}^n\rightarrow \mathbb{R}^n$ such that $h'=h$ on $B$, $h'=i$ away from $B'$ and $h'$ is $\chi(\delta)$-close to the inclusion.

  \end{prop}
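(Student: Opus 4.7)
The plan is to adapt Sullivan's hyperbolic immersion device (the Lipschitz analogue of the Kirby torus trick) and to track the Lipschitz constants through each step of the construction. The key idea is to reduce a deformation problem on $\mathbb{R}^n$, which is non-compact, to a deformation problem on a closed hyperbolic $n$-manifold $M$, where a handle decomposition with finitely many handles (the number depending only on $n$) is available and where bounds on the Lipschitz constants compound only finitely many times.

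First I would construct the Sullivan immersion. Since $B'$ is compact, one can cover an open neighborhood $U_0$ of $B'$ with finitely many small balls on which a local isometric immersion $e\colon U_0\rightarrow M$ into a closed hyperbolic $n$-manifold is defined and is an open embedding on a thickening of $B'$. Because $e$ is a local isometry (up to a fixed geometric bi-Lipschitz factor independent of $L$ and $\delta$), conjugating by $e$ changes the bi-Lipschitz constant only by a constant depending on $n$. For $\delta$ small enough, the composition $\bar h := e\circ h\circ e^{-1}$ extends by the identity outside $e(B')$ to a self-map of $M$ that is globally $C_0 L$-bi-Lipschitz and is $C_0 \delta$-close to the identity on $M$, for some $C_0=C_0(n)$.

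Next I would deform $\bar h$ to the identity (or rather, to a map equal to $\bar h$ on $e(B)$ and equal to the identity off $e(B')$) by a handle-by-handle procedure. Choose a fixed smooth handle decomposition of $M$ with $N=N(n)$ handles. On each handle, a quantitative bi-Lipschitz straightening lemma on $\mathbb{R}^n$ (of the type provided by the local deformation engines of Tukia--Väisälä and Luukkainen) produces a $C_1 L^{k}$-bi-Lipschitz self-map that agrees with the previous map on the appropriate smaller handle core and is the identity off a slight thickening; here $C_1=C_1(n)$ and $k$ is a constant depending on $n$. Iterating over all $N$ handles yields a self-map $\tilde h$ of $M$ that is $C_2 L^{K}$-bi-Lipschitz with $K=K(n)$, agrees with $\bar h$ on a neighborhood of $e(B)$, and is the identity off $e(B')$. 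The $\chi(\delta)$-closeness is maintained at each step because the straightening is carried out in arbitrarily thin collars when the input map is $\delta$-close to the identity. Pulling $\tilde h$ back via $e$ on $e(U_0)$ and extending by the identity elsewhere gives the required map $h'\colon \mathbb{R}^n\rightarrow \mathbb{R}^n$.

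The main obstacle is the quantitative handle step. Purely topological deformation principles such as $\mathcal{L}(X)$ and $\mathcal{D}(X)$ produce a perturbed $h'$ whose Lipschitz constant is not a priori controlled in terms of the Lipschitz constant of $h$. One therefore needs a genuinely Lipschitz-quantitative local deformation lemma (roughly, a bi-Lipschitz version of the Edwards--Kirby local contractibility estimate) whose output constant grows polynomially in the input constant $L$. Once this quantitative local lemma is established, the finiteness of the handle decomposition of $M$ automatically converts the polynomial local bound into the global bound $C L^{K}$ with $C$ and $K$ depending only on $n$, and the closeness to the inclusion follows since each handle step only moves points within a shrinking neighborhood whose size is controlled by $\delta$.
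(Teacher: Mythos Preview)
The paper does not prove this proposition; it is quoted with citations to Tukia--V\"ais\"al\"a and Luukkainen and then used as a black box to motivate the refined deformation rule $\mathcal{L}'(X)$. There is therefore no argument in the paper to compare your proposal against.

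Your outline is in the right spirit --- Sullivan's hyperbolic immersion device followed by a handle-by-handle straightening is indeed the architecture of the cited proofs --- but as written it is circular. The ``quantitative bi-Lipschitz straightening lemma on $\mathbb{R}^n$'' that you invoke on each handle is essentially the proposition itself (a locally $L$-bi-Lipschitz embedding close to the inclusion can be deformed to one that is the identity off a compactum, with output constant polynomial in $L$). The substance of the Tukia--V\"ais\"al\"a argument is precisely to manufacture this local quantitative deformation from scratch, via explicit conformal/radial interpolation formulas whose bi-Lipschitz constants are computed by hand; Sullivan's device then packages finitely many such local moves into a global one. So while your reduction to a compact hyperbolic model and your bookkeeping of constants through finitely many handles are correct, the step you yourself flag as the ``main obstacle'' is not a residual technicality but the entire content of the result, and your sketch does not supply it.
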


\begin{remark}
The constant $L_1$ depends only on $n$ and $L$.  In particular, $C$ and $K$ depend only on the ambient space.

\end{remark}

 Indeed, one can define the following deformation rule (see \cite{tukia-vaisala,jouni,bi-Lipschitzconcordanceimpliesbi-Lipschitzisotopy,Respectful,Sullivan}).

\begin{definition}
Let $X$ be a locally compact, locally connected metric space.

$\mathcal{L}^{'}(X)\colon$ Let $U$ be an open set in $X$, and $B$ is a compact subset of $U$. Then the following always holds.

$\mathcal{L}'(X,B,U)\colon$  For $Z$ a metric space, if $h_t\colon U\rightarrow X$, where $t\in Z$ are open Lipschitz embeddings so close to the inclusion so that the the deformation rule $h_t\rightarrow h_t'$  arising out of $\mathcal{L}(X,B,U)$ is well defined for all $t\in Z$, if $h_t$ induces a  Lipschitz isotopy $h$ such that $h$ is either $(1).$  Locally $L$-bi-Lipschitz, or $(2).$ $L$-bi-Lipschitz. Then there exists constants $C,K\geq 1$, independent of $h$ and $Z$, such that $h_t'$ induces, a isotopy $h'$ that is, respectively, either a  $(1').$ Locally $CL^K$- bi-Lipschitz isotopy or a  $(2').$ $CL^K$-bi-Lipschitz isotopy

\end{definition}

Armed with the previous proposition, and the above deformation rule, now one can obtain analogues of our results in which the Lipschitz constants are controlled. For example, we have the following isotopy extension theorem (which also, for example, yields an version of the union lemma in which the Lipschitz constants are controlled).

\begin{corollary}

\label{isotopywithconstantcontrol}
Assume $B$ is a locally connected metric space and $X$ is a locally compact locally connected metric space, and $V$ is an open subset of $X$, $K$ is a closed subset of $X$, in $V$ and with compact boundary in $V$.  The following holds.

Given a family $f_t\colon V\rightarrow X$ (where $t\in B$) of open Lipschitz embeddings, inducing a Lipschitz isotopy $f$ that is locally $L$-bi-Lipschitz for which for some $b\in B$, $f_b$ is the inclusion, $f_t(K)$ is closed in $X$ for all $t\in B$ and $\mathcal{L}'(V-K)$ holds true. Then, there exists a neighborhood $N_b$ of $b$, and a family of   Lipschitz isomorphisms $F_t\colon X\rightarrow X$ $(t\in N_b)$, such that the following holds:

\begin{enumerate}
\item The family $F_t$ induces a Lipschitz isotopy, with local bi-Lipschitz constants being uniform and of the form $CL^M$, where $C,M\geq 1$ are independent of $f$ and $B$ and depend only of $V-K$.
\item $F_t=f_t$ near $K$.

\end{enumerate}

\end{corollary}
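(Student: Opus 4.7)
The plan is to re-run the proof of Theorem~\ref{lipschitz isotopy} verbatim, invoking the refined principle $\mathcal{L}'(V-K)$ in place of $\mathcal{L}(V-K)$, and to bookkeep the local bi-Lipschitz constant through the three operations the proof uses after the initial deformation: one composition and two gluings. Since $f_b$ is already the inclusion, one may work directly with the family $f_t$ (taking $f'_t = f_t$ in the notation of that proof), avoiding the precomposition with $f_b^{-1}$.

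Set up the same auxiliary data as in the proof of Theorem~\ref{lipschitz isotopy}: a closed thickening $D$ of $K$ with $\partial D$ compact and contained in a precompact open neighborhood $U$ with $\overline U \subseteq V-K$; nested compacta $K_1 \subseteq \mathring K_2 \subseteq K_2 \subseteq \mathring K_3 \subseteq K_3 \subseteq \mathring K_4 \subseteq K_4 \subseteq U$ around $\partial D$; a closed neighborhood $Z$ of $K$ in $\mathring D$ disjoint from $\overline U$; and a small open neighborhood $O$ of $\partial U$ with $O \cap K_4 = \varnothing$. For $t$ in a sufficiently small neighborhood $N_b$ of $b$, the restrictions $f_t|_U$ define open Lipschitz embeddings into $V-K$ that are close to the inclusion, and collectively yield a locally $L$-bi-Lipschitz isotopy into $V-K$.

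Applying $\mathcal{L}'(V-K)$ (whose statement packages both the isotopy property and the constant control) produces a family of Lipschitz isomorphisms $h_t'\colon V-K \to V-K$, extended by the identity to $X$, equal to $f_t$ on $K_3$, equal to the identity outside $K_4$, and inducing a Lipschitz isotopy which is locally $C_0 L^{M_0}$-bi-Lipschitz for constants $C_0, M_0 \geq 1$ depending only on $V-K$. Setting $g_t = f_t \circ (h_t')^{-1}|_U$ (locally $C_0 L^{M_0+1}$-bi-Lipschitz by multiplicativity of local bi-Lipschitz constants under composition), glue $g_t$ on $U$ with $f_t$ on $(V - \overline U) \cup O$ to obtain $H_t\colon V \to X$, and finally glue $H_t$ on $\mathring D$ with the inclusion on $\mathring K_1 \cup (X - D)$ to obtain $F_t\colon X \to X$. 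Because each gluing is along a cover by open sets that agree on their (open) overlaps, and every point of the glued domain admits a small open neighborhood entirely inside one piece of the cover, the local bi-Lipschitz constant of the glued map is bounded by the maximum of the constants of its pieces. Since the inclusion contributes the constant $1$, both gluings preserve the bound $C_0 L^{M_0+1}$, and the family $F_t$ inherits the isotopy property from $h_t'$ and from continuity of composition and restriction in $t$. Taking $C = C_0$ and $M = M_0 + 1$, both depending only on $V-K$, yields the required bound.

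The main obstacle is purely the constant bookkeeping at the gluing steps. One must verify that when gluing a locally $L_1$-bi-Lipschitz and a locally $L_2$-bi-Lipschitz map along open pieces agreeing on an open overlap, the resulting map is locally $\max(L_1, L_2)$-bi-Lipschitz; this reduces to the observation that for every point of the domain one can choose a neighborhood contained in one of the two open pieces. Together with the multiplicativity of local constants under composition, this turns the controlled bound $C_0 L^{M_0}$ coming from $\mathcal{L}'(V-K)$ into the final bound $C_0 L^{M_0+1}$ for $F_t$, without introducing any dependence on $B$ or on the particular isotopy $f$.
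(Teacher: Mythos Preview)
Your proposal is correct and follows exactly the approach the paper intends: the paper gives no explicit proof of this corollary, stating only that ``armed with the previous proposition, and the above deformation rule, now one can obtain analogues of our results in which the Lipschitz constants are controlled,'' which is precisely re-running the proof of Theorem~\ref{lipschitz isotopy} under $\mathcal{L}'(V-K)$ with constant bookkeeping. Your observation that the gluing steps only take the maximum of local constants (so the single composition $f_t\circ (h_t')^{-1}$ is the only place the exponent grows, yielding $C_0 L^{M_0+1}$) makes the tracking explicit in a way the paper leaves to the reader.
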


\begin{remark}
If, in addition, $X$ is a length space, then the $F_t$ will be globally bi-Lipschitz and moreover, the constants will be of the form $CL^M$. Further note that if $X$ is compact, then the $F_t$ are all bi-Lipschitz.

\end{remark}

\begin{corollary}
Assume $F$ and $B$ are metric spaces, let $p\colon F\times B\rightarrow B$ denote the projection map. Fix $b\in B$. Identify $F$ with $p^{-1}(b)$. Let $U$ be an open subset of $F$ such that $\overline{U}$ is compact in $F$. Let $C\subseteq U$ be a compact set in $U$ and  let $f\colon U\times N\rightarrow F\times B$ be a product chart about $F$ that is locally $L$-biLipschitz. Assume $F$ is locally compact, locally connected and $\mathcal{L}^{'}(F)$ holds. Then there exists a neighborhood $N'$ of $b$ and a product chart $g\colon F\times N'\rightarrow F\times B$ about $F$ that is locally $CL^M$ bi-Lipschitz, where $C,M\geq 1$ are independent of $f$ and $L$. Further, $g=f$ near $C\times b$  and $g=id$ outside $K\times N'$, where $K$ is some compact neighborhood of $C$.

\end{corollary}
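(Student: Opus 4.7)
The plan is to parallel the proof of the uncontrolled product-chart extension lemma preceding the Union Lemma, but this time invoke Corollary \ref{isotopywithconstantcontrol} (the controlled isotopy extension) in place of Corollary \ref{strengthening}. All bi-Lipschitz estimates are then inherited automatically from that corollary, so the novelty consists only in tracing the constants through an otherwise routine construction.

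First, since $f$ respects the projection onto $B$, I would write $f(u,t) = (f_t(u), t)$, where each $f_t \colon U \to F$ is an open Lipschitz embedding and the assembled map $(u,t) \mapsto (f_t(u),t)$ is a Lipschitz isotopy in the sense of the definition preceding Theorem \ref{lipschitz submersions}. Because the projection onto $B$ is an isometry for the sum metric on $F \times B$, the hypothesis that $f$ is locally $L$-bi-Lipschitz transfers directly to yield that this isotopy is locally $L$-bi-Lipschitz (up to an absolute factor one can absorb into the final constant). The product-chart condition $f(u,b) = (u,b)$ then says that $f_b$ is the inclusion $U \hookrightarrow F$, so that for $t$ in a sufficiently small neighborhood of $b$ the map $f_t$ is arbitrarily close to the inclusion.

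Next, I would apply Corollary \ref{isotopywithconstantcontrol} to the family $\{f_t\}$ with ambient space $F$, open set $V = U$, and closed set equal to the compact set $C$. The hypotheses are immediate: $\partial C$ is compact since $C$ is; $f_t(C)$ is compact hence closed in $F$; the family is locally $L$-bi-Lipschitz by the preceding paragraph; $f_b$ is the inclusion; and $\mathcal{L}'(U - C)$ follows from $\mathcal{L}'(F)$ by locality of the principle. The corollary then produces a neighborhood $N'$ of $b$ and a family $\{F_t \colon F \to F\}_{t \in N'}$ of Lipschitz isomorphisms, assembling to a Lipschitz isotopy whose uniform local bi-Lipschitz constant is of the form $C'L^{M}$ with $C', M \geq 1$ independent of $f$ and $L$ (depending only on $U - C$, and so ultimately only on $F$), such that $F_t = f_t$ near $C$. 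Since $C$ is compact, the compactness clause (applicable because it holds in the uncontrolled version and is preserved by the same argument carrying the constants) yields $F_t = \mathrm{id}$ off a compact neighborhood $K$ of $C$.

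Finally, I would define $g \colon F \times N' \to F \times B$ by $g(x,t) = (F_t(x), t)$ and verify the conclusion. The product-chart identities $p \circ g = \pi_{N'}$ and $g(x,b) = (x,b)$ hold because $F_b = \mathrm{id}$; the relation $g = f$ near $C \times \{b\}$ is exactly $F_t = f_t$ near $C$; and $g = \mathrm{id}$ outside $K \times N'$ is the support condition on $F_t$. The fact that $\{F_t\}$ is a Lipschitz isotopy with local bi-Lipschitz constant $C'L^{M}$ translates, via the sum metric and the isometric $B$-factor, into $g$ being locally $C'L^M$-bi-Lipschitz (adjusting $C'$ by an absolute factor if necessary). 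The only real point of care, which I regard as the main obstacle, is keeping the product-chart structure compatible with the controlled isotopy extension; this is handled precisely by the observation that $f_b$ being the inclusion matches the single-time hypothesis of Corollary \ref{isotopywithconstantcontrol}, after which everything is bookkeeping.
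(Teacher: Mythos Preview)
Your proposal is correct and matches the paper's approach: the paper does not write out a proof for this corollary, but its placement immediately after Corollary~\ref{isotopywithconstantcontrol} and the earlier proof of the uncontrolled analogue (the lemma preceding Lemma~\ref{uniion lemma}) make clear that the intended argument is precisely the one you give---write $f(u,t)=(f_t(u),t)$, apply the controlled isotopy extension to the family $\{f_t\}$, and set $g(x,t)=(F_t(x),t)$. Your care in noting that the support clause (``$F_t=\mathrm{id}$ outside a compact $K$'') is not literally stated in Corollary~\ref{isotopywithconstantcontrol} but carries over from the proof of Theorem~\ref{lipschitz isotopy} is a fair observation, and otherwise the argument is exactly as the paper intends.
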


\printbibliography{}

\end{document}